\gdef\No{{\select@language{russian}\textnumero}}
\numberwithin{equation}{section}
\theoremstyle{plain}
\newtheorem{theorem}{Теорема}
\newtheorem{lemma}{Лемма}[section]
\theoremstyle{definition}
\newtheorem{proof}{Доказательство}
\begin{document}

\newcommand{\mdp}{\mathrm{mdp}}

\title{Нижние оценки чисел независимости дистанционных графов с вершинами в $\{-1, 0, 1\}^n$}

\author[A.\,R. ~Akhiiarov]{А.\,Р. ~Ахияров}
\address{Московский физико-технический институт (государственный университет)}
\email{akhiiarov.ar@phystech.edu}

\author[A.\,V. ~Bobu]{А.\,В. ~Бобу}
\address{Criteo S.\,A., Франция}
\email{a.v.bobu@gmail.com}

\author[A.\,M. ~Raigorodskii]{А.\,М. ~Райгородский}
\address{Московский физико-технический институт (государственный университет), кафедра дискретной математики и лаборатория продвинутой комбинаторики и сетевых приложений;\\ 
МГУ им. М.В. Ломоносова, механико-математический факультет, кафедра математической статистики и случайных процессов; \\
Кавказский математический центр Адыгейского государственного университета;\\ 
Бурятский государственный университет, институт математики и информатики}
\email{mraigor@yandex.ru}

\date{}
\udk{519.157.4+519.176}

\maketitle
\begin{fulltext}

\begin{abstract}
    Данная работа посвящена нижним оценкам чисел независимости дистанционных графов с вершинами в $\{-1,0,1\}^n$. Изучен асимптотический случай, получены новые результаты в широком диапазоне параметров. Приведены численные результаты, демонстрирующие нетривиальные соотношения между полученными оценками. Отдельно обсуждаются известные верхние оценки и их возможная неоптимальность. 

    Библиография: 33 названия. 
\end{abstract}

\begin{keywords}
дистанционные графы,  линейно-алгебраический метод, $(-1, 0, 1)$-векторы.
\end{keywords}

\markright{Числа независимости графов с вершинами в $\{-1, 0, 1\}^n$}

\section{Введение}

Одной из наиболее интересных и сложных задач комбинаторной геометрии является проблема Нелсона--Эрдеша--Хадвигера, которая получила широкую известность в середине XX века (см. \cite{Had}--\cite{Rai3}). Она заключается в отыскании величины $\chi(\mathbb R^n)$ --- хроматического числа пространства, то есть минимального количества цветов, в которые можно так покрасить все точки $\mathbb R^n$, чтобы любые две точки на расстоянии 1 были разных цветов. Удивительно, что задача по-прежнему далека от разрешения даже для случая плоскости: больше полувека было известно лишь, что $4 \leqslant \chi(\mathbb R^2) \leqslant 7$, причем обе оценки почти тривиальны. Тем неожиданнее недавний прорыв: в работе \cite{deGrey} Обри де Грей показал, что $\chi(\mathbb R^2) \geqslant 5$. 

В более высоких размерностях задача становится лишь сложнее: зазоры между нижней и верхней оценками увеличиваются с ростом $n$. В работе \cite{CKR} приведен достаточно актуальный список нижних оценок $\chi(\mathbb R^n)$ при $n \leqslant 12$, а в обзоре \cite{Rai2} указаны верхние оценки при $n \leqslant 4$.\footnote{Насколько нам известно, приведенные в этих работах оценки по-прежнему являются наилучшими известными.} В асимптотическом случае, при $n\to\infty$, легко получить оценку $\chi(\mathbb R^n) \leqslant \left( \sqrt n + 2 \right)^n$, а первая, по-видимому, нетривиальная граница была получена в работе \cite{LR} 1972 года (см. также \cite{P}):
$$
    \chi(\mathbb R^n) \leqslant (3 + o(1))^n, \quad n\to\infty.
$$
Эта оценка остается наилучшей известной на настоящий момент. Что касается нижних границ, то первые нелинейные оценки появились в работе \cite{LR} в 1972 году. В ней по сути было использовано семейство дистанционных графов\footnote{Дистанционным назовем граф, вершины которого --- точки пространства, а ребра --- пары точек на заданном расстоянии $a>0$.} $G_n(k,t) = (V_n(k), E_n(k,t))$, у которых 
\begin{gather*}
    V_n(k) = \{ \mathbf x = (x_1, \ldots, x_n) \colon x_i \in \{0, 1\}, x_1 + \ldots + x_n = k \}, \\ 
    E_n(k,t) = \left\{ \{\mathbf x, \mathbf y\} \colon |\mathbf x - \mathbf y| = \sqrt{2(k-t)}  \right\} = \left\{ \{\mathbf x, \mathbf y\} \colon (\mathbf x, \mathbf y) = t \right\}. 
\end{gather*}
Напомним, что \textit{хроматическое число} графа $G$ --- это наименьшее число $\chi(G)$ цветов, в которые можно так покрасить все вершины графа, чтобы между вершинами одного цвета не было ребра. Ясно, что $\chi(\mathbb R^n) \geqslant \chi(G_n(k,t))$. В свою очередь $\displaystyle \chi(G_n(k,t)) \geqslant \frac{|V(G_n(k,t))|}{\alpha(G_n(k,t))}$, где $\alpha(G)$ --- \textit{число независимости} графа $G$ --- максимальный размер множества вершин, которые попарно не соединены ребром. Поэтому нижние границы из статьи \cite{LR} опирались на верхние оценки чисел независимости $\alpha(G_n(k, t))$.

Величина $m(n, k, t) = \alpha(G_n(k, t))$ впоследствии изучалась в большом количестве работ (см. \cite{CKR}, \cite{Nagy}--\cite{BobuKuprRai}). Отдельно хотелось бы отметить работу \cite{FW}, в которой авторы при помощи новаторского линейно-алгебраического метода получили, среди прочего, верхние оценки величины $\displaystyle m\left(n, k, t\right)$ при некоторых специально выбранных значениях $k$ и $t$ и, как следствие, экспоненциальную нижнюю границу для $\chi(\mathbb R^n)$: 
$$
    \chi(\mathbb R^n) \geqslant \left( \frac{1+\sqrt 2}2 + o(1) \right)^n \geqslant (1.207\ldots + o(1))^n.
$$ 
Поскольку проблема отыскания величины $m(n,k,t)$ мотивирована приведенными выше результатами, при ее изучении часто считается, что $n$ --- основной параметр, а $k$ и $t$ --- некоторые функции от $n$. При этом обычно предполагается, что $n\to\infty$. Параметры $k$ и $t$ в этом случае могут вести себя по-разному относительно $n$. Если $k$ и $t$ --- константы, не зависящие от $n$, то известно, что верхняя оценка работы \cite{FW} асимптотически точна. Если же параметры $k$ и $t$ растут линейно по сравнению с $n$, ситуация намного сложнее: при $2t \leqslant k$ найдена лишь асимптотика $\ln m(n,k,t)$, а при $2t > k$ даже эта асимптотика по-прежнему неизвестна (см. \cite{PonRai1}, \cite{BobuKuprRai}).

Развитие линейно-алгебраического метода привело к тому, что в работе \cite{Rai1} была улучшена нижняя оценка $\chi(\mathbb R^n)$:
\begin{equation}\label{eq:chi-R-n-lower}
    \chi(\mathbb R^n) \geqslant (1.239\ldots + o(1))^n.
\end{equation}
Идея улучшения заключается в использовании графов 
$$G_n(k_{-1}, k_0, k_1, t)=(V_n(k_{-1}, k_0, k_1), E_n(k_{-1}, k_0, k_1, t)),$$
похожих на $G_n(k,t)$, но с вершинами в $\{-1, 0, 1\}^n$: 
\begin{gather*}
  V_n(k_{-1}, k_0, k_1) = \{\mathbf{x} = (x_1, \ldots, x_n) \colon x_i \in \{-1, 0, 1\}, |\{i \colon x_i = -1\}| = k_{-1}, \\ |\{i \colon x_i = 0\}| = k_0, |\{i \colon x_i = 1\}|=k_1\}, \:
  E_n(k_{-1}, k_0, k_1,t) = \left\{ \{\mathbf x, \mathbf y\} \colon (\mathbf x, \mathbf y) = t \right\}.
\end{gather*}

При получении границы \eqref{eq:chi-R-n-lower} использовалась верхняя оценка величины \\ $\alpha(G_n(k_{-1}, k_0, k_1, t))$ при некоторых специально выбранных значениях параметров. Тем самым данная работа в том числе положила начало изучению величины 
\begin{gather*}
    m(n, k_{-1}, k_0, k_1, t) = \alpha(G_n(k_{-1}, k_0, k_1, t)) = \\ 
    = \max\{|W|: W \subset V_n(k_{-1}, k_0, k_1), (\mathbf{x}, \mathbf{y}) \neq t \text{ для всех } \mathbf{x}, \mathbf{y} \in W \}.
\end{gather*}
Впоследствии отысканию величины $m(n, k_{-1}, k_0, k_1, t)$ было посвящено значительное число работ. Например, статьи \cite{KhR}--\cite{MosRai} дают содержательные нижние оценки $m(n, k_{-1}, k_0, k_1, t)$, часть из которых мы приведем в следующем разделе. Работа \cite{PonRai2}, напротив, дает верхние оценки $m(n, k_{-1}, k_0, k_1, t)$, которые применимы в широком диапазоне значений параметров. Также ряд нетривиальных оценок получен в работах \cite{FrKup1}--\cite{FrKup5}.

Целью настоящей работы будет исследование величины $m(n, k_{-1}, k_0, k_1, t)$. В первую очередь мы сосредоточимся на получении нижних оценок для широкого диапазона значений $k_{-1}, k_0, k_1$ и $t$. Мы ограничимся случаем, когда $n\to\infty$, а $k_{-1}, k_0, k_1$ и $t$ растут линейно по сравнению с $n$. Более формально, пусть числа $k_{-1}', k_0', k_1', t' \in (0, 1)$.  Будем считать, что $k_{-1} \sim k_{-1}'n, k_0 \sim k_0'n, k_1 \sim k_1'n, t \sim t'n$ при $n \to \infty$ и $k_{-1} + k_0 + k_1 = n$. Оказывается, что в этом случае и нижние, и верхние оценки экспоненциальные, то есть вида $(\lambda(k'_{-1}, k'_{0}, k'_1, t') + o(1))^n$, где значение $\lambda$ не зависит от $n$. Мы будем считать, что две оценки различаются сильно, если имеют разные величины $\lambda$. 

В следующем разделе мы укажем известные на настоящий момент оценки величины $m(n, k_{-1}, k_0, k_1, t)$. В разделе \ref{sect:new-results} мы приведем формулировки полученных нами результатов. Раздел \ref{sect:numerical-results} будет посвящен численному сравнению новых и уже известных оценок. Наконец, раздел \ref{sect:proofs} содержит доказательства полученных нами результатов.

\section{Известные оценки}\label{sect:KnownBounds}

Прежде чем переходить к новым результатам, содержащим нижние оценки величины $m(n, k_{-1}, k_0, k_1, t)$, приведем известные на настоящий момент ее верхние оценки. Первую из них можно отыскать в книге \cite{RaiLinAlgebra}.  

\begin{theorem}\label{th:FW}
    Пусть $n, k_{-1}, k_0, k_1, t$ таковы, что $k_1 + k_{-1} \leqslant n/2$, $k_{-1} \leqslant k_1$, разность $q = k_1 + k_{-1} - t$ является степенью простого числа и $k_1 + k_{-1} - 2q < -2k_{-1}$. Тогда 
    $$  
        m(n, k_{-1}, k_0, k_1, t) \leqslant \sum\limits_{(i,j) \in \mathcal A} C_n^i C_{n-i}^j, 
    $$
    где 
    $$
        \mathcal A = \{(i,j) \colon i+j\leqslant n, i+2j \leqslant q-1\}.
    $$
\end{theorem}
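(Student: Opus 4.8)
\textit{Proof proposal.} The plan is to run a Frankl--Wilson type linear-algebraic argument over $\mathbb{Q}$, using Lucas' theorem modulo the prime $p$ underlying $q = p^r$. Let $W \subseteq V_n(k_{-1},k_0,k_1)$ be an independent set of maximal size, so $m(n,k_{-1},k_0,k_1,t) = |W|$, and put $s = k_1 + k_{-1}$; then $(\mathbf{x},\mathbf{x}) = s$ for every $\mathbf{x} \in W$ and $t = s - q$. To each $\mathbf{x}\in W$ I will attach a polynomial $F_{\mathbf{x}}(\mathbf{y})$ which is ``nonzero at $\mathbf{x}$ and zero on $W\setminus\{\mathbf{x}\}$'' in a suitable modular sense, show these polynomials are linearly independent over $\mathbb{Q}$, and bound the dimension of the space in which (after reduction) they live.

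The first step is to pin down the range of the pairwise inner products. For distinct $\mathbf{x},\mathbf{y}\in V_n(k_{-1},k_0,k_1)$ one has $(\mathbf{x},\mathbf{y})\le s-1$, since $(\mathbf{x},\mathbf{y})=s$ would give $|\mathbf{x}-\mathbf{y}|^2 = 2s-2s = 0$; and the number of coordinates carrying opposite nonzero signs is at most $2\min(k_{-1},k_1) = 2k_{-1}$ (here $k_{-1}\le k_1$), so $(\mathbf{x},\mathbf{y})\ge -2k_{-1}$. Hence, for distinct $\mathbf{x},\mathbf{y}\in W$, the integer $v := (\mathbf{x},\mathbf{y})-t$ satisfies $v\neq 0$ (as $\mathbf{x},\mathbf{y}$ are non-adjacent) and, using $-2k_{-1}-t = q-s-2k_{-1}$ together with the hypothesis $k_1+k_{-1}-2q<-2k_{-1}$, also $-q < v \le q-1$; whereas $v = q$ when $\mathbf{x}=\mathbf{y}$. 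This is the only place the arithmetic hypotheses on $k_{-1},k_1,t,q$ are used.

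Next I set, for each $\mathbf{x}\in W$,
$$
F_{\mathbf{x}}(\mathbf{y}) = \binom{(\mathbf{x},\mathbf{y})-t+q-1}{q-1} = \frac{1}{(q-1)!}\prod_{m=1}^{q-1}\bigl((\mathbf{x},\mathbf{y})-t+m\bigr),
$$
an integer-valued polynomial in $y_1,\dots,y_n$ of total degree at most $q-1$. Since $q-1 = p^r-1$ has base-$p$ expansion $(p-1,\dots,p-1)$, Lucas' theorem gives $\binom{N}{q-1}\not\equiv 0\pmod p$ exactly when $N\equiv -1\pmod q$; therefore $F_{\mathbf{x}}(\mathbf{x}) = \binom{2q-1}{q-1}\equiv 1\pmod p$, while $F_{\mathbf{x}}(\mathbf{y}) = \binom{v+q-1}{q-1}\equiv 0\pmod p$ for $\mathbf{y}\in W\setminus\{\mathbf{x}\}$, because there $v+q-1\in\{0,1,\dots,2q-2\}\setminus\{q-1\}$. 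Replacing $y_i^3$ by $y_i$ repeatedly (valid on $\{-1,0,1\}^n$ and total-degree nonincreasing) turns $F_{\mathbf{x}}$ into a polynomial $\widetilde F_{\mathbf{x}}$ that coincides with $F_{\mathbf{x}}$ as a function on $\{-1,0,1\}^n$ and lies in the span of the monomials $\prod_i y_i^{e_i}$ with $e_i\in\{0,1,2\}$ and $\sum_i e_i\le q-1$.

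Finally, the $\widetilde F_{\mathbf{x}}$, $\mathbf{x}\in W$, are linearly independent over $\mathbb{Q}$: from a vanishing combination, cleared of denominators to have integer coefficients $c_{\mathbf{x}}$ not all divisible by $p$, pick $\mathbf{x}_0$ with $p\nmid c_{\mathbf{x}_0}$, evaluate at $\mathbf{x}_0$, and reduce modulo $p$ to get $c_{\mathbf{x}_0}\equiv\sum_{\mathbf{x}}c_{\mathbf{x}}F_{\mathbf{x}}(\mathbf{x}_0)\equiv 0\pmod p$, a contradiction. Since the monomials $\prod_i y_i^{e_i}$ with $e_i\in\{0,1,2\}$ form a basis of the space of $\mathbb{Q}$-valued functions on $\{-1,0,1\}^n$, the span containing all $\widetilde F_{\mathbf{x}}$ has dimension equal to the number of such monomials with $\sum_i e_i\le q-1$, namely $\sum_{(i,j)\in\mathcal A}\frac{n!}{i!\,j!\,(n-i-j)!} = \sum_{(i,j)\in\mathcal A}C_n^i C_{n-i}^j$ (choose $j$ coordinates for exponent $2$ and $i$ for exponent $1$). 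Thus $|W|$ is at most this sum, which is the claim. The main obstacle I anticipate is keeping the bookkeeping consistent: $(q-1)!$ is not invertible modulo $p$ once $r\ge 2$, so the $F_{\mathbf{x}}$ cannot be regarded as polynomials over $\mathbb{F}_p$, and one must work over $\mathbb{Q}$ with integer-valued polynomials, applying the mod-$p$ reduction only to the numerical linear relation among their values; the Lucas computation itself is then routine.
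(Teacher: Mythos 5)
Ваше доказательство корректно: оценка диапазона скалярных произведений (строгое неравенство $v>-q$ именно из условия $k_1+k_{-1}-2q<-2k_{-1}$), работа с целозначными многочленами $\binom{(\mathbf x,\mathbf y)-t+q-1}{q-1}$ над $\mathbb Q$ с редукцией значений по модулю $p$ через теорему Люка, замена $y_i^3\to y_i$ и подсчёт мономов с показателями из $\{0,1,2\}$ суммарной степени не выше $q-1$ дают в точности $\sum_{(i,j)\in\mathcal A}C_n^iC_{n-i}^j$. Статья эту теорему не доказывает, а лишь цитирует из книги о линейно-алгебраическом методе, и ваше рассуждение --- это именно стандартное доказательство типа Франкла--Уилсона, лежащее в основе цитируемого результата (условие $k_1+k_{-1}\leqslant n/2$ в нём, как вы и заметили, по существу не используется).
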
  
Кроме того, приведем оценку, первый случай которой по сути был доказан в работе \cite{Rai} в виде леммы. Второй случай теоремы был получен на основе той же идеи и новейшего результата из \cite{PonRai1}, поэтому в разделе \ref{sect:FW-plus-flower-proof} мы приведем его доказательство.

\begin{theorem}\label{th:FW-plus-flower}
    Пусть $k_1 \leqslant \frac{n - k_{-1}}{2}$, $k_{-1} \leqslant t$ и $q = k_1 + k_{-1} - t$ является степенью простого числа. Тогда имеют место два случая:
    \begin{enumerate}
        \item Если $2(t - k_{-1}) < k_1$, то
        $$
            m(n, k_{-1}, k_0, k_1, t) \leqslant C_n^{k_{-1}} \sum\limits_{i = 0}^{q-1} C_{k_1 + k_0}^i. 
        $$
        \item Если $2(t - k_{-1}) \geqslant k_1$, то положим $d = 2(t - k_{-1}) - k_1 + 1$. Далее, пусть натуральные $d_1, d_2$ таковы, что $d_1 + d_2 = d$. Положим $n_1 = (n - k_{-1}) - d_1, k_2 = k_1 - d_1$. Определим натуральное число $r$ из соотношения 
        $$
            (k_2 - d_2 + 1)\left(2 + \frac{d_2 - 1}{r + 1}\right) \leqslant n_1 < (k_2 - d_2 + 1)\left(2 + \frac{d_2 - 1}{r}\right).
        $$
        Тогда 
        $$
            m(n, k_{-1}, k_0, k_1, t) \leqslant C_n^{k_{-1}} \frac{ C^{d_2 + 2r}_{n_1} C^{d_1}_{n - k_{-1}}}{ C^{d_2 + r}_{k_2} C^{r}_{n_1 - k_2} C^{d_1}_{k_1} } \left(\sum\limits_{i = 0}^{q-1} C_{n_1}^i\right).
        $$
    \end{enumerate}
\end{theorem}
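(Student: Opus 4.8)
The plan is to strip the $(-1)$-coordinates off first, exactly as in the lemma of \cite{Rai}, reducing the problem to one about uniform set families, and then to feed the resulting family into the estimate of \cite{PonRai1}. Recall that $m(N,k,\lambda)=\alpha(G_N(k,\lambda))$ is the largest size of a family of $k$-element subsets of an $N$-element set no two distinct members of which meet in exactly $\lambda$ points. Let $W$ be an independent set in $G_n(k_{-1},k_0,k_1,t)$ and write $W=\bigsqcup_A W_A$, the disjoint union over the $C_n^{k_{-1}}$ possible sets $A=\{i:x_i=-1\}$. If $\mathbf x,\mathbf y\in W_A$, then $x_iy_i=1$ for $i\in A$ while $x_i,y_i\in\{0,1\}$ for $i\notin A$, so $(\mathbf x,\mathbf y)=k_{-1}+|S_1(\mathbf x)\cap S_1(\mathbf y)|$, where $S_1(\cdot)$ is the $(+1)$-support; since $k_{-1}\leqslant t$, the condition $(\mathbf x,\mathbf y)\neq t$ is the same as $|S_1(\mathbf x)\cap S_1(\mathbf y)|\neq s:=t-k_{-1}$. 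Hence the $(+1)$-supports of $W_A$ form a $k_1$-uniform family, on the $n-k_{-1}$ coordinates outside $A$, that avoids intersection $s$, so that
$$
   m(n,k_{-1},k_0,k_1,t)\ \leqslant\ C_n^{k_{-1}}\cdot m(n-k_{-1},k_1,s),\qquad s=t-k_{-1},
$$
where $q=k_1+k_{-1}-t=k_1-s$ is a prime power and, by the hypothesis of the second case, $2s\geqslant k_1$.

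Next I would pass to links. Fix an integer $d_1$ with $1\leqslant d_1\leqslant d-1$, let $\mathcal F$ be a $k_1$-uniform family on a ground set $X$ of size $n-k_{-1}$ that avoids intersection $s$ and attains $m(n-k_{-1},k_1,s)$, and for a $d_1$-subset $D\subseteq X$ put $\mathcal F_D=\{F\setminus D:F\in\mathcal F,\ D\subseteq F\}$. For distinct $F,F'\in\mathcal F$ containing $D$ one has $|(F\setminus D)\cap(F'\setminus D)|=|F\cap F'|-d_1$, so $\mathcal F_D$ is a $(k_1-d_1)$-uniform family, on the $n_1:=(n-k_{-1})-d_1$ points of $X\setminus D$, avoiding intersection $s-d_1$ (which is positive, since $q\geqslant1$ forces $s<k_1$, hence $d-1=2s-k_1<s$). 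Double counting incidences between members of $\mathcal F$ and the $d_1$-subsets they contain gives $|\mathcal F|\,C_{k_1}^{d_1}=\sum_{|D|=d_1}|\mathcal F_D|\leqslant C_{n-k_{-1}}^{d_1}\cdot m(n_1,k_2,s-d_1)$ with $k_2=k_1-d_1$; that is,
$$
   m(n-k_{-1},k_1,s)\ \leqslant\ \frac{C_{n-k_{-1}}^{d_1}}{C_{k_1}^{d_1}}\,m(n_1,k_2,s-d_1),
$$
which accounts for the factor $C_{n-k_{-1}}^{d_1}/C_{k_1}^{d_1}$ in the statement.

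It remains to estimate $m(n_1,k_2,s')$ with $s'=s-d_1=t-k_{-1}-d_1$. The extremal family here is $k_2$-uniform on $n_1$ points, avoids intersection $s'$, still has $k_2-s'=q$ (a prime power), and now satisfies $2s'-k_2=2(t-k_{-1})-k_1-d_1=(d-1)-d_1=d_2-1\geqslant0$, so that $k_2-d_2+1=2q$. These are exactly the hypotheses of the estimate of \cite{PonRai1} for uniform families with a forbidden intersection not below half the uniformity; with $r$ the nonnegative integer singled out by the double inequality in the statement — the half-open intervals $\bigl[(k_2-d_2+1)(2+\tfrac{d_2-1}{r+1}),(k_2-d_2+1)(2+\tfrac{d_2-1}{r})\bigr)$ tile $[2(k_2-d_2+1),\infty)$ (read with the obvious convention at $r=0$), and $k_1\leqslant(n-k_{-1})/2$ gives $n_1\geqslant 2k_2\geqslant 2(k_2-d_2+1)$ — that estimate yields
$$
   m(n_1,k_2,s')\ \leqslant\ \frac{C_{n_1}^{\,d_2+2r}}{C_{k_2}^{\,d_2+r}\,C_{n_1-k_2}^{\,r}}\sum_{i=0}^{q-1}C_{n_1}^{\,i}.
$$
Multiplying the three displayed inequalities gives the asserted bound. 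The Frankl–Wilson input underlying this estimate (and the first case of the theorem) is the bound $m(N,k,\lambda)\leqslant\sum_{i=0}^{q-1}C_N^{\,i}$ for $2\lambda<k$ with $q=k-\lambda$ a prime power; \cite{PonRai1} pushes it into the range $k/2\leqslant\lambda<k$ by a further incidence count against $(d_2+2r)$-element sets, which is where $r$ enters.

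The step I expect to be the main obstacle is this last one: one must check that the parameters manufactured by the two reductions meet the hypotheses of \cite{PonRai1} verbatim — $q$ remains a prime power (immediate), $1\leqslant d_1\leqslant d-1$, $k_2\leqslant n_1$, and $n_1$ lies in the range where $r$ is well defined — and one should mind the degenerate parameter choices (for instance $d=1$, or a split forcing $r=0$), where the last estimate collapses to the plain Frankl–Wilson bound and one may simply invoke the first case of the theorem or Theorem \ref{th:FW}. If \cite{PonRai1} is not stated in precisely this normalization, the genuinely non-routine content to reproduce is the Frankl–Wilson-type bound for $k$-uniform families avoiding an intersection in the range $[k/2,k)$; everything else is the two bookkeeping reductions above.
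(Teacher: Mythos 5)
Your proposal is correct and takes essentially the same route as the paper: partition an independent set by the position of its $(-1)$-support (paying the factor $C_n^{k_{-1}}$), observe that after deleting those coordinates one gets a $k_1$-uniform $(0,1)$-family on $n-k_{-1}$ coordinates avoiding intersection $t-k_{-1}$, and then apply Frankl--Wilson in the first case and the Ponomarenko--Raigorodskii bound in the second. The only deviation is that you carry out the $d_1$-subset double-counting reduction explicitly before invoking \cite{PonRai1}, whereas the paper cites Theorem 2 of \cite{PonRai1} for the entire second factor (that reduction being part of the cited result), so this is purely a matter of bookkeeping.
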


Далее нам будет полезно следующее обозначение. Введем функцию $\mdp$ (minimum dot product), вычисляющую минимально возможное скалярное произведение двух векторов из $V_n(k_{-1}, k_0, k_1)$. В параграфе \ref{sect:lem-mdp} (лемма \ref{lem:mdp}) будет доказано, что эта функция имеет довольно простой вид: 
\begin{equation*}
  \mdp(k_{-1}, k_0, k_1) =
    \begin{cases}
      -2\min(k_{-1}, k_1), \: \text{если $k_0 \geqslant \max(k_{-1}, k_1) - \min(k_{-1}, k_1)$,} \\
      \max(k_{-1}, k_1)  - 3\min(k_{-1}, k_1) - k_0, \: \text{иначе.}
    \end{cases}       
\end{equation*}

Приведем результат из работы \cite{PonRai2}, в некотором смысле дополняющий теорему \ref{th:FW}.

\begin{theorem}\label{th:PonRai} 
    Пусть $n, k_{-1}, k_0, k_1, t$ таковы, что $k_1 + k_{-1} \leqslant n/2$, $k_{-1} \leqslant k_1$, разность $q = k_1 + k_{-1} - t$ является степенью простого числа и $k_1 + k_{-1} - 2q \geqslant -2k_{-1}$. Положим $ d = k_1+k_{-1}-2q+1 $. Пусть неотрицательные целые числа 
$$
    m_{-1}, \: m_0, \: m_1, \: m_{-1, -1}, \: m_{0,-1}, \: m_{1, -1}, \: m_{-1, 0}, \: m_{0,0}, 
 \: m_{1,0}, \: m_{-1,1}, \: m_{0,1}, \: m_{1,1}
 $$
 удовлетворяют ограничениям
 \begin{gather*}
   m_{-1} + m_0 + m_1 = n, \\
   m_{\alpha, -1} + m_{\alpha, 0} + m_{\alpha, 1} = k_{\alpha} \text{ для всех } \alpha \in \{-1, 0, 1\}, \\
   m_{-1, \beta} + m_{0,\beta} + m_{1, \beta} = m_{\beta} \text{ для всех } \beta \in \{-1, 0, 1\}, \\
   \sum^1_{\beta = -1} \mdp(m_{-1, \beta}, m_{0, \beta}, m_{1, \beta}) \geqslant d.
 \end{gather*}
Тогда
\begin{gather*}\label{eq:upper-bound-final-expression}
    m(n,k_{-1}, k_0, k_1, t) \leqslant \frac{n!}{m_{-1}!m_0!m_1!} \cdot \frac{m_{-1,-1}! m_{-1,0}! m_{-1,1}!}{k_{-1}!} \cdot \frac{m_{0,-1}! m_{0,0}! m_{0,1}!}{k_0!} \cdot \\ \frac{m_{1,-1}! m_{1,0}! m_{1,1}!}{k_1!} \cdot \left(\sum_{(i,j) \in {\cal A}} C_n^i C_{n-i}^j\right),
\end{gather*}
где
$$
{\cal A} = \{(i,j) \colon i + j \leqslant n, \, i+2j \leqslant q-1\}.
$$
\end{theorem}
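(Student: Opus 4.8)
The plan is to combine an averaging argument over coordinate partitions with the linear‑algebraic method underlying Theorem~\ref{th:FW}. Let $W\subseteq V_n(k_{-1},k_0,k_1)$ be an independent set of $G_n(k_{-1},k_0,k_1,t)$, so $(\mathbf x,\mathbf y)\neq t$ for all $\mathbf x,\mathbf y\in W$, and fix nonnegative integers $m_\beta,m_{\alpha,\beta}$ satisfying the four displayed constraints. First I would observe that a vector $\mathbf x\in V_n(k_{-1},k_0,k_1)$ is \emph{compatible} with an ordered partition $\pi=(B_{-1},B_0,B_1)$ of $\{1,\dots,n\}$ having $|B_\beta|=m_\beta$ — meaning $\mathbf x$ has exactly $m_{\alpha,\beta}$ coordinates equal to $\alpha$ inside $B_\beta$ for all $\alpha,\beta\in\{-1,0,1\}$ — for exactly $\prod_{\alpha}\frac{k_\alpha!}{m_{\alpha,-1}!\,m_{\alpha,0}!\,m_{\alpha,1}!}$ of the $\frac{n!}{m_{-1}!\,m_0!\,m_1!}$ partitions in question, a count independent of $\mathbf x$. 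Double counting the pairs $(\mathbf x,\pi)$ with $\mathbf x\in W$ compatible with $\pi$ and averaging over $\pi$, there is a partition $\pi^\ast=(B^\ast_{-1},B^\ast_0,B^\ast_1)$ for which the subset $W'$ of vectors of $W$ compatible with $\pi^\ast$ satisfies $|W|\le |W'|\cdot\frac{n!}{m_{-1}!\,m_0!\,m_1!}\cdot\prod_\alpha\frac{m_{\alpha,-1}!\,m_{\alpha,0}!\,m_{\alpha,1}!}{k_\alpha!}$, which is precisely the combinatorial prefactor in the statement. So it remains to prove $|W'|\le\sum_{(i,j)\in\mathcal A}C_n^iC_{n-i}^j$.

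Next I would read off the geometry of $W'$. For $\mathbf x\in W'$ the restriction $\mathbf x|_{B^\ast_\beta}$ lies in $V_{m_\beta}(m_{-1,\beta},m_{0,\beta},m_{1,\beta})$, so for any $\mathbf x,\mathbf y\in W'$ each blockwise product satisfies $(\mathbf x|_{B^\ast_\beta},\mathbf y|_{B^\ast_\beta})\ge\mdp(m_{-1,\beta},m_{0,\beta},m_{1,\beta})$ by the very definition of $\mdp$ (lemma~\ref{lem:mdp}). Summing over $\beta$ and invoking the last constraint gives $(\mathbf x,\mathbf y)\ge d=k_1+k_{-1}-2q+1$. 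Combined with $(\mathbf x,\mathbf y)\le k_1+k_{-1}=t+q$, which is an equality only for $\mathbf x=\mathbf y$ (equality analysis in Cauchy–Schwarz together with $k_{-1},k_0,k_1$ being fixed forces coincidence of the nonzero patterns and signs), and with $(\mathbf x,\mathbf y)\neq t$, this shows that for distinct $\mathbf x,\mathbf y\in W'$ one has $(\mathbf x,\mathbf y)\in\{t-q+1,\dots,t+q-1\}\setminus\{t\}$; in particular $(\mathbf x,\mathbf y)\not\equiv t\pmod q$, whereas $(\mathbf x,\mathbf x)\equiv t\pmod q$.

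Finally comes the linear‑algebraic step, which is the heart of the proof of Theorem~\ref{th:FW} and which I would reuse verbatim. Write $q=p^s$, and to each $\mathbf x\in W'$ attach the polynomial $f_{\mathbf x}(\mathbf z)=\binom{(\mathbf z,\mathbf x)-t-1}{q-1}$ in the variables $\mathbf z=(z_1,\dots,z_n)$, of degree $q-1$. Reducing it by the relations $z_i^3=z_i$ valid on $\{-1,0,1\}^n$ does not increase the weighted degree in which each $z_i$ counts once and each $z_i^2$ counts twice, so $f_{\mathbf x}$ lies in the $\mathbb{Q}$‑span of the monomials $\prod_{a\in I}z_a\prod_{b\in J}z_b^2$ over disjoint $I,J\subseteq\{1,\dots,n\}$ with $|I|+2|J|\le q-1$; this span has dimension $\sum_{(i,j)\in\mathcal A}C_n^iC_{n-i}^j$, hence the integer matrix $M=(f_{\mathbf x}(\mathbf y))_{\mathbf x,\mathbf y\in W'}$ has $\operatorname{rank}_{\mathbb{Q}}M\le\sum_{(i,j)\in\mathcal A}C_n^iC_{n-i}^j$. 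On the other hand $f_{\mathbf x}(\mathbf x)=\binom{q-1}{q-1}=1$, while for $\mathbf x\neq\mathbf y$ the integer $(\mathbf x,\mathbf y)-t-1$ runs over $\{-q,\dots,q-2\}\setminus\{-1\}$, and a short computation with Lucas' theorem yields $\binom{m}{q-1}\equiv 0\pmod p$ for every such $m$; thus $M$ reduced modulo $p$ is the identity matrix, and since reduction modulo $p$ cannot raise the rank of an integer matrix, $|W'|=\operatorname{rank}_{\mathbb{F}_p}(M\bmod p)\le\operatorname{rank}_{\mathbb{Q}}M\le\sum_{(i,j)\in\mathcal A}C_n^iC_{n-i}^j$, which finishes the argument. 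The delicate point is exactly this last paragraph: one must verify that after averaging the single forbidden value $t$ genuinely sits inside a \emph{two‑sided} window of radius $q-1$ — this is what the hypothesis $k_1+k_{-1}-2q\ge-2k_{-1}$, equivalently the choice $d=k_1+k_{-1}-2q+1$, is tailored to arrange, since without the partition trick the bound $(\mathbf x,\mathbf y)\ge\mdp(k_{-1},k_0,k_1)$ on all of $W$ would be too weak here — and one must use that $q$ is a prime power, not merely a prime, for a single degree‑$(q-1)$ polynomial to annihilate all $2q-2$ off‑diagonal residues modulo $p$; the inequality $\operatorname{rank}_{\mathbb{F}_p}\le\operatorname{rank}_{\mathbb{Q}}$ for integer matrices is what lets us tolerate the non‑$p$‑integral coefficients of $\binom{\cdot}{q-1}$ instead of trying to work over $\mathbb{F}_p$ from the outset.
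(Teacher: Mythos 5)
Theorem~\ref{th:PonRai} is stated in this paper as a quotation from \cite{PonRai2} and is not proved here, so there is no in-text proof to compare against; judged on its own, your argument is correct and has exactly the two-step structure that the form of the bound dictates: double counting over ordered partitions with block sizes $m_{-1},m_0,m_1$ gives precisely the multinomial prefactor and reduces everything to a subfamily $W'$ compatible with one partition, for which the constraint $\sum_{\beta}\mdp(m_{-1,\beta},m_{0,\beta},m_{1,\beta})\geqslant d$ together with $(\mathbf x,\mathbf y)\leqslant k_1+k_{-1}$ (equality only for $\mathbf x=\mathbf y$) forces all off-diagonal scalar products into $\{t-q+1,\dots,t+q-1\}\setminus\{t\}$, after which the Frankl--Wilson argument of \cite{FW} (polynomials $\binom{(\mathbf z,\mathbf x)-t-1}{q-1}$, reduction by $z_i^3=z_i$ to monomials $\prod_{a\in I}z_a\prod_{b\in J}z_b^2$ with $|I|+2|J|\leqslant q-1$, and $\operatorname{rank}_{\mathbb F_p}\leqslant\operatorname{rank}_{\mathbb Q}$ for the integer value matrix, which is the identity modulo $p$ by Lucas' theorem) bounds $|W'|$ by $\sum_{(i,j)\in\mathcal A}C_n^iC_{n-i}^j$. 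The individual verifications you sketch --- the compatibility count $\prod_{\alpha}k_{\alpha}!/(m_{\alpha,-1}!\,m_{\alpha,0}!\,m_{\alpha,1}!)$ per vector, the vanishing of $\binom{m}{q-1}$ modulo $p$ for $m\in\{-q,\dots,q-2\}\setminus\{-1\}$, and the observation that the hypotheses $k_1+k_{-1}\leqslant n/2$, $k_{-1}\leqslant k_1$, $k_1+k_{-1}-2q\geqslant -2k_{-1}$ merely delimit the regime complementary to Theorem~\ref{th:FW} rather than enter the argument --- all check out.
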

Отметим, что первые четыре множителя в правой части \eqref{eq:upper-bound-final-expression} можно было бы переписать в следующем виде: 
\begin{gather*}
    \frac{n!}{m_{-1}!m_0!m_1!} \cdot \frac{m_{-1,-1}! m_{-1,0}! m_{-1,1}!}{k_{-1}!} \cdot \frac{m_{0,-1}! m_{0,0}! m_{0,1}!}{k_0!} \cdot \frac{m_{1,-1}! m_{1,0}! m_{1,1}!}{k_1!} = \\ 
    = \frac{C_n^{m_{-1}} C_{m_0 + m_1}^{m_0}}{\prod\limits_{\alpha = -1}^1 C_{k_{\alpha}}^{m_{\alpha, -1}} C_{m_{\alpha, 0} + m_{\alpha, 1}}^{m_{\alpha, 1}}}. 
\end{gather*}
Мы будем активно использовать подобную форму записи в формулировках следующих теорем. 

Сформулируем лучшую известную на настоящий момент нижнюю оценку, но для большей ясности сперва опишем похожую идею для случая $(0,1)$-векторов. Как следует из работы \cite{BobuKuprRai}, для величины $m(n,k,t)$ одной из самых простых конструкций в случае нижних оценок оказалась конструкция Альсведе--Хачатряна (см. \cite{AKh1}--\cite{AB}). Основная идея ее построения такова. Исходное множество элементов $\{1, \ldots, n\}$ разбивается на две, вообще говоря, неравные части. В качестве итоговой совокупности мы выбираем векторы с $k$ единицами так, чтобы уже в одной из этих частей их попарные скалярные произведения оказались больше $t$. Это можно сделать, если, например, набирать в совокупность только те векторы, у которых количество единиц в первой из частей больше некоторой заранее выбранной величины. Небольшая сложность заключается лишь в том, чтобы правильно подобрать эту величину и аккуратно доказать, что количество векторов в такой конструкции действительно растет экспоненциально с ростом $n$.

Такая конструкция без труда обобщается на случай $(-1, 0, 1)$-векторов, однако формулировка теоремы становится более громоздкой. Сперва сформулируем результат из работы \cite{GLRU2}, а затем прокомментируем использованные параметры. 
\begin{theorem}\label{th:decreasing}
    Пусть фиксированы неотрицательные целые числа 
$$m_{-1}, \: m_0, \: m_{1}, \: m_{-1, -1}, \: m_{0,-1}, \: m_{1, -1}, \: m_{-1, 0}, \: m_{0,0}, 
 \: m_{1,0}, \: m_{-1,1}, \: m_{0,1}, \: m_{1,1},$$
 удовлетворяющие ограничениям теоремы \ref{th:PonRai} и условию на минимальное скалярное произведение с параметром $t$: 
 \begin{gather*}
   m_{-1} + m_0 + m_1 = n,\\  
   m_{\alpha, -1} + m_{\alpha, 0} + m_{\alpha, 1} = k_{\alpha} \text{ для всех } \alpha \in \{-1, 0, 1\}, \\
   m_{-1, \beta} + m_{0,\beta} + m_{1, \beta} = m_{\beta} \text{ для всех } \beta \in \{-1, 0, 1\},\\
   \sum^1_{\beta = -1} \mdp(m_{-1, \beta}, m_{0, \beta}, m_{1, \beta}) > t.
 \end{gather*}
Тогда
$$m(n, k_{-1}, k_0, k_1, t) \geqslant \prod^1_{\beta = -1} C^{m_{-1, \beta}}_{m_{\beta}} C^{m_{0, \beta}}_{m_{0, \beta} + m_{1, \beta}}.$$
\end{theorem}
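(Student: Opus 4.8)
The plan is to exhibit an explicit independent set of the claimed size by a ``block'' construction of Ahlswede--Khachatrian type, now adapted to three symbols. First I would split the coordinate set $\{1,\dots,n\}$ into three pairwise disjoint blocks $B_{-1}, B_0, B_1$ with $|B_\beta| = m_\beta$; this is possible precisely because $m_{-1}+m_0+m_1 = n$. Then I would take $W$ to be the family of all vectors $\mathbf{x} \in \{-1,0,1\}^n$ such that, inside each block $B_\beta$, the vector $\mathbf{x}$ has exactly $m_{-1,\beta}$ coordinates equal to $-1$, exactly $m_{0,\beta}$ coordinates equal to $0$, and exactly $m_{1,\beta}$ coordinates equal to $1$.

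Next I would verify that $W \subseteq V_n(k_{-1}, k_0, k_1)$: for $\mathbf{x}\in W$ the total number of coordinates equal to a fixed $\alpha \in \{-1,0,1\}$ is $\sum_{\beta} m_{\alpha,\beta} = k_\alpha$, which is exactly the constraint $m_{\alpha,-1}+m_{\alpha,0}+m_{\alpha,1} = k_\alpha$. To count $|W|$: the arrangements inside distinct blocks are chosen independently, and inside $B_\beta$ the number of admissible arrangements is the multinomial coefficient $\frac{m_\beta!}{m_{-1,\beta}!\,m_{0,\beta}!\,m_{1,\beta}!} = C^{m_{-1,\beta}}_{m_\beta}\, C^{m_{0,\beta}}_{m_{0,\beta}+m_{1,\beta}}$ (choose the positions of the $-1$'s among $m_\beta$ slots, then the positions of the $0$'s among the remaining $m_{0,\beta}+m_{1,\beta}$ slots, the rest being $1$'s). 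Hence $|W| = \prod_{\beta=-1}^{1} C^{m_{-1,\beta}}_{m_\beta}\, C^{m_{0,\beta}}_{m_{0,\beta}+m_{1,\beta}}$, which is precisely the quantity on the right-hand side.

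It then remains to show that $W$ is an independent set of $G_n(k_{-1}, k_0, k_1, t)$, i.e.\ $(\mathbf{x},\mathbf{y}) \neq t$ for all $\mathbf{x},\mathbf{y}\in W$. Denoting by $\mathbf{x}|_{B_\beta}$ the restriction of $\mathbf{x}$ to the block $B_\beta$, we have $(\mathbf{x},\mathbf{y}) = \sum_{\beta=-1}^{1}(\mathbf{x}|_{B_\beta},\, \mathbf{y}|_{B_\beta})$. By construction each $\mathbf{x}|_{B_\beta}$ belongs to $V_{m_\beta}(m_{-1,\beta}, m_{0,\beta}, m_{1,\beta})$, so by Lemma \ref{lem:mdp} (in fact only the trivial lower-bound half of the definition of $\mdp$ is needed) we get $(\mathbf{x}|_{B_\beta},\, \mathbf{y}|_{B_\beta}) \geqslant \mdp(m_{-1,\beta}, m_{0,\beta}, m_{1,\beta})$ for every $\beta$. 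Summing over $\beta$ and invoking the hypothesis $\sum_{\beta} \mdp(m_{-1,\beta}, m_{0,\beta}, m_{1,\beta}) > t$ yields $(\mathbf{x},\mathbf{y}) > t$, hence $(\mathbf{x},\mathbf{y}) \neq t$, as required.

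I do not expect a genuine obstacle here: the construction is direct, and the only nontrivial input — the closed form of $\mdp$ — is isolated in Lemma \ref{lem:mdp}, of which we use merely the easy inequality ``any two vectors with the prescribed symbol counts have scalar product at least $\mdp$''. The one place that calls for a little care is keeping the twelve parameters straight: the relations for $m_{\alpha,\beta}$ with the first index summed out are what guarantee $W \subseteq V_n(k_{-1},k_0,k_1)$, whereas the relations with the second index summed out are what make the block-by-block count of $|W|$ come out to the stated product.
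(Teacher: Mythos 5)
Ваше доказательство верно и по существу совпадает с подходом статьи: это та самая конструкция Альсведе--Хачатряна с разбиением координат на три блока, которую авторы описывают в комментарии после теоремы \ref{th:decreasing} (сама теорема цитируется из \cite{GLRU2}, а подсчёт $|W|$ — из \cite{MosRai}). Единственный содержательный шаг — поблочная нижняя оценка скалярного произведения через $\mdp$ и суммирование с условием $\sum_{\beta}\mdp(\cdot)>t$ — у вас проведён корректно.
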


В отличие от случая $(0,1)$-векторов теперь более выгодным оказывается разбиение исходного множества на три части: $\{1, \ldots, n\} = M_{-1} \sqcup M_0 \sqcup M_1$. Если обозначить $|M_{\beta}| = m_{\beta}, \beta\in \{-1,0,1\}$, то ясно, что $m_{-1}+m_0+m_1=n$, как и указано в теореме выше. Далее, будем набирать в совокупность векторы, у которых в каждой части $M_{\beta}$ содержится $m_{1, \beta}$ единиц, $m_{0, \beta}$ нулей и $m_{-1, \beta}$ минус единиц. Несложно понять, что тогда в качестве чисел $m_{\alpha, \beta}$ (где $\alpha, \beta \in \{-1, 0, 1\}$) можно взять лишь те, которые связаны уравнениями
\begin{gather*}
   m_{\alpha, -1} + m_{\alpha, 0} + m_{\alpha, 1} = k_{\alpha} \text{ для всех } \alpha \in \{-1, 0, 1\}, \\
   m_{-1, \beta} + m_{0,\beta} + m_{1, \beta} = m_{\beta} \text{ для всех } \beta \in \{-1, 0, 1\}.
\end{gather*}
Наконец, последнее условие является наиболее содержательным. Понятно, что в каждой из частей $M_{\beta}$ можно посчитать минимально возможное скалярное произведение пары ``подвекторов'' (состоящих из $m_{\beta}$ координат): оно равно $\mdp(m_{-1, \beta}, m_{0, \beta}, m_{1, \beta})$. Если окажется, что сумма таких минимальных скалярных произведений больше $t$, то запрет никогда не реализуется, и размер полученной совокупности векторов служит нижней оценкой величины $m(n, k_{-1}, k_0, k_1, t)$.  Подсчитать размер построенной конструкции довольно просто: достаточно лишь оценить число способов выбрать векторы по описанному выше правилу. Мы будем называть полученную совокупность \textit{конструкцией Альсведе--Хачатряна}. 

Остальные нижние оценки из работ \cite{GLRU1}--\cite{MosRai} являются по сути частными случаями теоремы \ref{th:decreasing}, мы не будем их приводить отдельно. Что касается работы \cite{BobuKuprRai} (посвященной случаю $(0, 1)$-векторов), то, как мы увидим в следующем разделе, почти все оценки из нее удается обобщить на случай $(-1, 0, 1)$-векторов. Коль скоро наши новые оценки в следующем разделе более общие, формулировать специальные теоремы для случая $k_{-1} = 0$ мы сочли избыточным. 

\section{Формулировки новых результатов}\label{sect:new-results}

Первая из полученных нами оценок является обобщением теоремы 4 из работы \cite{BobuKuprRai}. Для формулировки теоремы нам понадобится определение величины $h(n, k_{-1}, k_0, k_1, t)$, которая задает размер максимальной совокупности векторов с попарными скалярными произведениями меньше $t$:
$$
    h(n, k_{-1}, k_0, k_1, t) = \text{max}\{|W|: W \subset V_n(k_{-1}, k_0, k_1), \: (\mathbf{x}, \mathbf{y}) < t \text{ для всех } \mathbf{x}, \mathbf{y} \in W \}.
$$

Теперь сформулируем полученный результат, а затем изложим основную идею доказательства, которое будет полноценно изложено в параграфе \ref{sect:VG-proof}. 
\begin{theorem}\label{th:VG}
    Пусть $k_{-1}, k_0, k_1$ таковы, что следующее выражение не равно нулю: 
    \begin{gather*}\label{eq:d-def}
        d(n, k_{-1}, k_0, k_1, t) = \sum\limits_{(l_{1,1}, l_{-1,-1}, l_{-1,1}, l_{1,-1}) \in \mathcal B} C^{l_{1,1}}_{k_1} C^{l_{1,-1}}_{k_1-l_{1,1}} C^{l_{-1,-1}}_{k_{-1}} C^{l_{-1,1}}_{k_{-1}-l_{-1,-1}} \\ C^{k_{1} - l_{1,1} - l_{-1,1}}_{k_{0}} C^{k_{-1} - l_{-1,-1} - l_{1,-1}}_{k_{0} - k_{1} + l_{1,1} + l_{-1,1}}  P(l_{1,1}, l_{-1,1}, l_{-1,-1}, l_{1,-1}, t).
    \end{gather*}
    Здесь 
    \begin{equation*}
  P(l_{1,1}, l_{-1,1}, l_{-1,-1}, l_{1,-1}, t) =
    \begin{cases}
      1, & \text{если $l_{1,1} - l_{-1,1}  + l_{-1,-1} - l_{1,-1} \geqslant t$,} \\
      0, & \text{иначе;}
    \end{cases}       
    \end{equation*}
    и 
    \begin{multline*}
        \mathcal B = \bigl\{(l_{1,1}, l_{-1,-1}, l_{-1,1}, l_{1,-1}) \colon 0 \leqslant l_{1,1} \leqslant k_1; 0 \leqslant l_{-1,-1} \leqslant k_{-1}; \\ 
        0 \leqslant l_{-1,1} \leqslant \min(k_{-1} - l_{-1, -1}, k_1 - l_{1,1}); 0 \leqslant l_{1,-1} \leqslant \min(k_{-1} - l_{-1, -1}, k_1 - l_{1,1}) \bigr\}.
    \end{multline*}    
    Тогда 
    $$
        m(n, k_{-1}, k_0, k_1, t) \geqslant h(n, k_{-1}, k_0, k_1, t) \geqslant \left\lceil\frac{|V(n,k_{-1}, k_0, k_1)|}{d(n, k_{-1}, k_0, k_1, t)} \right\rceil. 
    $$
\end{theorem}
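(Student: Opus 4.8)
The statement splits into two inequalities. The first, $m(n,\dots)\geqslant h(n,\dots)$, is immediate: a family $W\subset V_n(k_{-1},k_0,k_1)$ with $(\mathbf{x},\mathbf{y})<t$ for all distinct $\mathbf{x},\mathbf{y}\in W$ has, a fortiori, $(\mathbf{x},\mathbf{y})\neq t$ for all such pairs, hence is an independent set of $G_n(k_{-1},k_0,k_1,t)$; taking $W$ of maximal size gives the claim. So the work lies in the second inequality, and the plan is to read it off a degree count in an auxiliary graph.

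Let $\Gamma$ be the graph on vertex set $V:=V_n(k_{-1},k_0,k_1)$ in which $\mathbf{x}$ and $\mathbf{y}$ are joined whenever $\mathbf{x}\neq\mathbf{y}$ and $(\mathbf{x},\mathbf{y})\geqslant t$; by definition $h(n,\dots)=\alpha(\Gamma)$. I would first prove that $\Gamma$ is regular of degree exactly $d(n,\dots)-1$. Granting this, the bound follows from the elementary greedy argument for independent sets: repeatedly add a vertex to an independent set $I$ and delete it together with its (at most $d-1$) remaining neighbours; each step removes at most $d$ vertices of $V$, so the process runs for at least $\lceil |V|/d\rceil$ steps, whence $\alpha(\Gamma)\geqslant\lceil |V|/d\rceil$. (The hypothesis $d\neq0$ ensures $d\geqslant1$, so this is meaningful.)

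The core of the argument, and the only step requiring genuine care, is the identification $\deg_\Gamma(\mathbf{x})=d-1$. Fix $\mathbf{x}_0\in V$; for an arbitrary $\mathbf{y}\in V$ set $l_{a,b}=|\{i:(\mathbf{x}_0)_i=a,\ y_i=b\}|$ for $a,b\in\{-1,0,1\}$. From $\mathbf{x}_0\in V$ we get $\sum_b l_{a,b}=k_a$, from $\mathbf{y}\in V$ we get $\sum_a l_{a,b}=k_b$, and $(\mathbf{x}_0,\mathbf{y})=l_{1,1}+l_{-1,-1}-l_{1,-1}-l_{-1,1}$; hence the quadruple $(l_{1,1},l_{-1,-1},l_{-1,1},l_{1,-1})$ determines the remaining five overlap numbers, and it ranges over $\mathcal{B}$ (together with quadruples forcing a negative or over-large overlap, handled below). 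For an admissible quadruple, the number of $\mathbf{y}\in V$ realising it equals the number of ways to assign the entries of $\mathbf{y}$ within each of the three coordinate-classes of $\mathbf{x}_0$, that is, the product of multinomial coefficients $\binom{k_1}{l_{1,1},l_{1,-1},l_{1,0}}\binom{k_{-1}}{l_{-1,-1},l_{-1,1},l_{-1,0}}\binom{k_0}{l_{0,1},l_{0,-1},l_{0,0}}$; substituting $l_{1,0}=k_1-l_{1,1}-l_{1,-1}$, $l_{-1,0}=k_{-1}-l_{-1,-1}-l_{-1,1}$, $l_{0,1}=k_1-l_{1,1}-l_{-1,1}$, $l_{0,-1}=k_{-1}-l_{-1,-1}-l_{1,-1}$ turns this into exactly the product of six binomial coefficients in the definition of $d$, while $P(l_{1,1},l_{-1,1},l_{-1,-1},l_{1,-1},t)$ is exactly the indicator of $(\mathbf{x}_0,\mathbf{y})\geqslant t$. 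Summing over $\mathcal{B}$ thus counts all $\mathbf{y}\in V$ with $(\mathbf{x}_0,\mathbf{y})\geqslant t$, once one checks that $\mathcal{B}$ contains every genuinely admissible quadruple and that the extra quadruples in $\mathcal{B}$ (those with $l_{0,1}>k_0$, or $l_{0,0}<0$, etc.) contribute vanishing terms under the convention $C_N^k=0$ for $k<0$ or $k>N$. This bookkeeping is routine, but it is where all the actual verification sits.

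Finally I would observe that this count includes $\mathbf{y}=\mathbf{x}_0$ itself, so that $\deg_\Gamma(\mathbf{x}_0)$ equals $d-1$ and not $d$. Indeed every term of $d$ has $(\mathbf{x}_0,\mathbf{y})=l_{1,1}+l_{-1,-1}-l_{1,-1}-l_{-1,1}\leqslant k_1+k_{-1}$, so the assumption $d\neq0$ (i.e.\ $P\neq0$ for some term) forces $t\leqslant k_1+k_{-1}$; hence the diagonal quadruple $l_{1,1}=k_1,\ l_{-1,-1}=k_{-1},\ l_{-1,1}=l_{1,-1}=0$, which corresponds to $\mathbf{y}=\mathbf{x}_0$, satisfies $P=1$ and — all its binomial factors being $1$ — contributes exactly $1$ to $d$. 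Therefore $\deg_\Gamma(\mathbf{x}_0)=d-1$, and since this value is independent of $\mathbf{x}_0$, the graph $\Gamma$ is $(d-1)$-regular; combined with the greedy bound above this gives $h(n,\dots)=\alpha(\Gamma)\geqslant\lceil |V|/d\rceil$, as required. The only point where I expect to have to be careful is the overlap-pattern bookkeeping of the third paragraph; everything else is formal.
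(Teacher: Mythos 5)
Your proposal is correct and follows essentially the paper's route: the heart in both cases is the identical overlap-pattern count showing that each fixed $\mathbf x$ has exactly $d(n,k_{-1},k_0,k_1,t)$ vectors $\mathbf y$ with $(\mathbf x,\mathbf y)\geqslant t$, and the final step is the standard Gilbert--Varshamov argument (the paper phrases it via a maximal set and a union bound, you via $(d-1)$-regularity and greedy deletion, which is the same elementary bound $\alpha\geqslant\lceil |V|/d\rceil$, and indeed matches the paper's own informal greedy description of the construction). Your explicit handling of the diagonal term $\mathbf y=\mathbf x$ and of the vanishing out-of-range binomials is a minor tightening of bookkeeping the paper leaves implicit, not a different method.
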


Конструкция, используемая в доказательстве данной теоремы, состоит из векторов, попарные скалярные произведения которых меньше $t$. Идея построения проста и может быть описана, например, таким образом. Конструкцию будем строить итеративно, добавляя на каждом шаге один вектор. Зафиксируем произвольный вектор $v \in V_n(k_{-1}k_0, k_1)$ и добавим его в итоговую совокупность. Уберем из рассмотрения все ``плохие'' векторы, скалярное произведение которых с первым хотя бы $t$. Число таких векторов, как будет показано в доказательстве, равно знаменателю полученной оценки. Из оставшихся векторов выберем любой и добавим его в совокупность. Очевидно, для второго вектора количество ``плохих'' векторов будет также не больше знаменателя оценки. Таким образом, можно жадно добавлять каждый следующий вектор в совокупность, пока общее число ``плохих'' векторов не достигнет числа всех вершин $|V_n(k_{-1}k_0, k_1)|$. 
Данная конструкция используется в доказательстве классической для теории кодирования границы Варшамова--Гилберта (см. \cite{V}--\cite{MS}). Поэтому (а также следуя обозначениям работы \cite{BobuKuprRai}) мы будем называть полученную совокупность \textit{конструкцией Варшамова--Гилберта}. 

Следующая теорема соединяет идеи конструкций Варшамова--Гилберта и Альсведе--Хачатряна. 
\begin{theorem}\label{th:Glue}
    Пусть $t_1 \leqslant t$ и фиксированы неотрицательные целые числа 
$$m_{-1}, \: m_0, \: m_1, \: m_{-1, -1}, \: m_{0,-1}, \: m_{1, -1}, \: m_{-1, 0}, \: m_{0,0}, 
 \: m_{1,0}, \: m_{-1,1}, \: m_{0,1}, \: m_{1,1},$$
удовлетворяющие ограничениям
\begin{gather*}
    m_{-1} + m_0 + m_1 = n, \\
    m_{\alpha, -1} + m_{\alpha, 0} + m_{\alpha, 1} = k_{\alpha} \text{ для всех } \alpha \in \{-1, 0, 1\}, \\
   m_{-1, \beta} + m_{0,\beta} + m_{1, \beta} = m_{\beta} \text{ для всех } \beta \in \{-1, 0, 1\}, \\
      \sum^1_{\beta = -1} \mdp(m_{-1, \beta}, m_{0, \beta}, m_{1, \beta}) > t, \\
    t_1 + 2 \cdot extras(m_{0, -1}, m_{0, 1}, m_{1, -1}, m_{1, 1},  m_{-1, -1}, m_{-1, 1}, m_{-1}, m_1) + 2(m_{1, 0} + m_{-1, 0}) \leqslant t,
\end{gather*}
где 

\begin{multline*}
 extras(m_{0, -1}, m_{0, 1}, m_{1, -1}, m_{1, 1},  m_{-1, -1}, m_{-1, 1}, m_{-1}, m_1) = \min(E_1, E_2, E_3) \text{ и} \\
 E_1 = 2\min(m_{-1}, m_1), \\
 E_2 = \min(m_{-1, 1}, m_{-1, -1}) + \min(m_{1, 1}, m_{1, -1}) + \min(m_{-1}, m_1), \\
 E_3 = \min(m_{-1, 1}, m_{-1, -1}) + \min(m_{1, 1}, m_{1, -1}) + \min(m_{1, 1} + m_{-1, -1}, m_{-1, 1} + m_{1, -1}) + \\ m_{0, 1} + m_{0, -1}.
\end{multline*}

Тогда выполнена оценка
$$
m(n, k_{-1}, k_0, k_1, t) \geqslant h(n, m_{-1}, m_0, m_1, t_1) \prod^1_{\beta = -1} C^{m_{-1, \beta}}_{m_{\beta}} C^{m_{0, \beta}}_{m_{0, \beta} + m_{1, \beta}}.
$$
\end{theorem}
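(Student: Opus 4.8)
The idea is to glue an Ahlswede--Khachatrian construction onto a Varshamov--Gilbert one. Fix a partition $\{1,\dots,n\}=M_{-1}\sqcup M_0\sqcup M_1$ with $|M_\beta|=m_\beta$, and, by definition of $h$, fix a family $\mathcal W\subseteq V_n(m_{-1},m_0,m_1)$ of the maximal size $h(n,m_{-1},m_0,m_1,t_1)$ all of whose pairwise scalar products are $<t_1$ (hence $\le t_1-1$, by integrality). Each $\mathbf w\in\mathcal W$ is itself a partition of $\{1,\dots,n\}$ into level sets $M^{\mathbf w}_{-1},M^{\mathbf w}_0,M^{\mathbf w}_1$ of sizes $m_{-1},m_0,m_1$. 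For $\mathbf w\in\mathcal W$ let $D(\mathbf w)$ be the set of all $\mathbf x\in V_n(k_{-1},k_0,k_1)$ having, for every $\alpha,\beta\in\{-1,0,1\}$, exactly $m_{\alpha,\beta}$ coordinates equal to $\alpha$ inside $M^{\mathbf w}_\beta$. The constraints on the $m_{\alpha,\beta}$ make $D(\mathbf w)$ nonempty of size $\prod_{\beta=-1}^{1}C^{m_{-1,\beta}}_{m_\beta}C^{m_{0,\beta}}_{m_{0,\beta}+m_{1,\beta}}$. The candidate set is $W=\bigcup_{\mathbf w\in\mathcal W}D(\mathbf w)$. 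One checks that the $D(\mathbf w)$, $\mathbf w\in\mathcal W$, are pairwise disjoint (immediate apart from degenerate choices of the $m_{\alpha,\beta}$; in general this is forced, e.g., from $(\mathbf w,\mathbf x)=(\mathbf w',\mathbf x)$ for $\mathbf x\in D(\mathbf w)\cap D(\mathbf w')$ together with the separation $(\mathbf w,\mathbf w')<t_1$), so that $|W|$ equals the claimed right-hand side.

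\textbf{Independence.} It remains to show $(\mathbf x,\mathbf y)\ne t$ for distinct $\mathbf x,\mathbf y\in W$. If $\mathbf x,\mathbf y$ lie in a common $D(\mathbf w)$, decompose over the blocks: $(\mathbf x,\mathbf y)=\sum_\beta(\mathbf x|_{M^{\mathbf w}_\beta},\mathbf y|_{M^{\mathbf w}_\beta})\ge\sum_\beta\mdp(m_{-1,\beta},m_{0,\beta},m_{1,\beta})>t$, which is precisely the mechanism behind Theorem~\ref{th:decreasing}. Otherwise $\mathbf x\in D(\mathbf w)$, $\mathbf y\in D(\mathbf w')$ with $\mathbf w\ne\mathbf w'$, and the heart of the argument is the structural estimate
\[
  (\mathbf x,\mathbf y)\ \le\ (\mathbf w,\mathbf w')+2\cdot extras(\dots)+2(m_{1,0}+m_{-1,0}),
\]
after which $(\mathbf x,\mathbf y)\le(t_1-1)+(t-t_1)=t-1<t$ by the last hypothesis of the theorem, and we are done.

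\textbf{Proof of the structural estimate.} Write $(\mathbf x,\mathbf y)-(\mathbf w,\mathbf w')=\sum_i(x_iy_i-w_iw_i')$ and keep only the positive summands. Since all entries lie in $\{-1,0,1\}$, a summand is positive only when either $w_iw_i'=0$ and $x_iy_i=1$, or $w_iw_i'=-1$ (in which case it is $\le 2$). Coordinates of the first kind lie in the zero block of $\mathbf w$ or of $\mathbf w'$, and in each such block at most $m_{1,0}+m_{-1,0}$ coordinates are nonzero in the corresponding vector, so together they contribute at most $2(m_{1,0}+m_{-1,0})$. Coordinates of the second kind are exactly those with $i\in(M^{\mathbf w}_{-1}\cap M^{\mathbf w'}_{1})\cup(M^{\mathbf w}_{1}\cap M^{\mathbf w'}_{-1})$, and among these only the ones with $x_iy_i=1$ contribute ($+2$ each). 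The number of such coordinates admits three independent bounds: crudely by the block sizes $|M^{\mathbf w}_{\pm1}|,|M^{\mathbf w'}_{\pm1}|\le m_{\pm1}$, which yields $E_1$; and, using that $x_iy_i=1$ forces $x_i=y_i\in\{-1,1\}$ and hence limits how many $\pm1$'s the two decorations may simultaneously place in the relevant blocks, which yields the sharper $E_2$ and $E_3$ (built from the quantities $\min(m_{\pm1,1},m_{\pm1,-1})$, $\min(m_{-1},m_1)$, $\min(m_{1,1}+m_{-1,-1},m_{-1,1}+m_{1,-1})$, $m_{0,1}$, $m_{0,-1}$). Taking the minimum, coordinates of the second kind contribute at most $2\cdot\min(E_1,E_2,E_3)=2\cdot extras(\dots)$; adding the two contributions gives the estimate.

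\textbf{Main obstacle.} The difficult step is exactly this last counting: proving that $\min(E_1,E_2,E_3)$ genuinely dominates the number of positions $i$ with $w_i=-w_i'\ne 0$ that the two decorations can turn into agreeing $\pm1$'s requires a careful case analysis of how the prescribed counts $m_{\alpha,\beta}$ restrict the joint placement of $\pm1$'s in $\mathbf x$ and $\mathbf y$ (each of $E_1,E_2,E_3$ being the relevant bound in a different parameter regime). A secondary, and in practice harmless, point is verifying the disjointness of the families $D(\mathbf w)$ used in the cardinality count.
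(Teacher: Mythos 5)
Your construction coincides with the paper's: a Varshamov--Gilbert family with pairwise products $<t_1$, the blocks $D(\mathbf w)$ (the paper's $\mathcal W_{\mathbf x}$), the same cardinality count, and the same target inequality $(\mathbf x,\mathbf y)\leqslant(\mathbf w,\mathbf w')+2\cdot extras+2(m_{1,0}+m_{-1,0})$. But your justification of this key inequality has a genuine gap. In the case analysis of $\sum_i(x_iy_i-w_iw_i')$ you assert that among the coordinates with $w_iw_i'=-1$ ``only the ones with $x_iy_i=1$ contribute ($+2$ each)''. This is false: a coordinate with $w_iw_i'=-1$ and $x_iy_i=0$ contributes $+1$. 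So over the opposite-sign blocks $L_{1,-1}\cup L_{-1,1}$ the quantity you must bound is not $2a$ (twice the number of agreeing $\pm1$'s) but $2a+b$, where $b$ is the number of positions there in which $x_iy_i=0$; bounding only the number of agreements by $\min(E_1,E_2,E_3)$ does not yield the required $2\cdot extras$. For $E_1$ the repair is easy ($a+b\leqslant l_{1,-1}+l_{-1,1}\leqslant E_1$), but for $E_2$ and $E_3$ it is not: your reading of $E_2,E_3$ as bounds on the number of agreeing $\pm1$'s leaves the terms $\min(m_{-1},m_1)$, $m_{0,1}+m_{0,-1}$ and $\min(m_{1,1}+m_{-1,-1},m_{-1,1}+m_{1,-1})$ with no role, whereas they are exactly what absorbs the $+1$ contributions you dropped.

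The inequality itself is true, and the paper proves it blockwise: for $L_{1,-1}$ one bounds $\sum_{i\in L_{1,-1}}(u_iv_i+1)$ --- agreements counted twice plus the zero products --- separately by each of $E_1,E_2,E_3$, using $l_{1,-1}\leqslant\min(m_{-1},m_1)$ to absorb the block size, and, for $E_3$, a lower bound on the number of forced opposite-sign positions (at least $l_{1,-1}-m_{1,1}-m_{0,1}-m_{-1,-1}-m_{0,-1}$, and symmetrically), which is where $m_{0,1}+m_{0,-1}$ and the third minimum come from; summing the two blocks gives $2\cdot extras$. This per-block accounting is the missing idea in your argument, and it is precisely the step you flagged as the main obstacle, so as written the proof is incomplete at its core. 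Your secondary points are fine: the zero-block estimate $2(m_{1,0}+m_{-1,0})$ is correct, and disjointness of the $D(\mathbf w)$ follows immediately from the two product estimates (a common vector would have scalar product with itself both $>t$ and $<t$), with no need for the vaguer argument you sketch.
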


Эта теорема будет доказана в параграфе \ref{sect:Glue-proof}. Идея доказательства состоит в следующем. Сначала построим конструкцию Вар\-ша\-мо\-ва--Гил\-бер\-та $\mathcal F$, в которой попарные скалярные произведения векторов не будут превышать $t_1$. Такая конструкция существует по теореме \ref{th:VG}. Будем считать, что каждый $\mathbf x \in \mathcal F$ задает значением своих координат некоторое разбиение $\{1, \ldots, n\} = M^{\mathbf x}_{-1} \sqcup M^{\mathbf x}_0 \sqcup M^{\mathbf x}_1$. Итоговая совокупность будет объединением совокупностей векторов, удовлетворяющих разбиению из теоремы \ref{th:decreasing} хотя бы для одного $\mathbf x \in \mathcal F$. Таким образом мы создаем много конструкций Альсведе--Хачатряна, в каждой из которых, как известно, все попарные скалярные произведения больше $t$. Если подобрать параметр $t_1$ неким специальным способом, то можно будет гарантировать, что векторы из разных конструкций Альсведе--Хачатряна имеют попарные скалярные произведения меньше $t$, и запрет не реализуется.

Наконец, последняя из полученных нами теорем развивает идеи теоремы \ref{th:Glue}.

\begin{theorem}\label{th:SuperGlue}
    Пусть неотрицательные целые числа 
$$t_1, m_{-1}, \: m_0, \: m_1, \: m_{-1, -1}, \: m_{0,-1}, \: m_{1, -1}, \: m_{-1, 0}, \: m_{0,0}, 
 \: m_{1,0}, \: m_{-1,1}, \: m_{0,1}, \: m_{1,1}$$
удовлетворяют ограничениям теоремы \ref{th:Glue}. 

Пусть $t_1 < s < m_{-1} + m_1$ и $t - 2(m_{1, 0} + m_{-1, 0}) \geqslant 0$. Тогда выполнена оценка 
\begin{multline*}
    m(n, k_{-1}, k_0, k_1, t) \geqslant h(n, m_{-1}, m_0, m_1, s) \times \\ 
    \times \left( \prod^1_{\beta = -1} C^{m_{-1, \beta}}_{m_{\beta}} C^{m_{0, \beta}}_{m_{0, \beta} + m_{1, \beta}} - h(n, m_{-1}, m_0, m_1, s) C_{m_0}^{m_{-1, 0}} C_{m_{0,0} + m_{1,0}}^{m_{0,0}} R \right),
\end{multline*}
где 
\begin{multline*}
    R = \max_{m_{-1} + m_1 - m_0 \leqslant l \leqslant \min(s + 4\min(m_{-1}, m_1), m_{-1} + m_1)} \sum\limits_{j = t - 2(m_{1,0} + m_{-1, 0})}^l \sum\limits_{i=0}^j C_l^j C_j^i \times \\
    \times C_{m_{-1} + m_1 - l}^{m_{1, 1} + m_{1, -1} + m_{-1, -1} + m_{-1, 1} - j} C_{m_{1, 1} + m_{1, -1} + m_{-1, -1} + m_{-1, 1} - j}^{m_{1, 1} + m_{1, -1} - i}.
\end{multline*}
\end{theorem}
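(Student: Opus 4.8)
The plan is to refine the gluing argument behind Theorem~\ref{th:Glue}. There one superimposes, over a Varshamov--Gilbert family $\mathcal F$, one copy of the construction of Theorem~\ref{th:decreasing} for each $\mathbf x\in\mathcal F$, and the separation $(\mathbf x,\mathbf x')<t_1$ is chosen so strong that no forbidden pair can occur between two different copies. Here we allow $\mathcal F$ to satisfy only $(\mathbf x,\mathbf x')<s$, with $s$ possibly much larger than $t_1$ — so $\mathcal F$ may be taken of the larger size $h(n,m_{-1},m_0,m_1,s)$ — and we pay for the weaker separation by deleting from each copy the relatively few vectors that could form a forbidden pair with a vector of another copy. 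Concretely, fix $\mathcal F\subseteq V_n(m_{-1},m_0,m_1)$ with all pairwise scalar products $<s$ and $|\mathcal F|=h(n,m_{-1},m_0,m_1,s)$ (such a family exists by the definition of $h$, with a quantitative lower bound on its size furnished by Theorem~\ref{th:VG}). Each $\mathbf x\in\mathcal F$ determines, by the positions of its coordinates, a partition $\{1,\dots,n\}=M^{\mathbf x}_{-1}\sqcup M^{\mathbf x}_0\sqcup M^{\mathbf x}_1$; let $\mathcal A_{\mathbf x}$ be, as in Theorem~\ref{th:decreasing}, the set of all $v\in V_n(k_{-1},k_0,k_1)$ having exactly $m_{\alpha,\beta}$ coordinates equal to $\alpha$ inside $M^{\mathbf x}_\beta$ for all $\alpha,\beta$. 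Since the $m_{\alpha,\beta}$ obey the hypotheses of Theorem~\ref{th:Glue}, in particular $\sum_\beta\mdp(m_{-1,\beta},m_{0,\beta},m_{1,\beta})>t$, any two distinct vectors inside one $\mathcal A_{\mathbf x}$ have scalar product exceeding $t$; moreover $|\mathcal A_{\mathbf x}|=\prod_{\beta=-1}^1 C_{m_\beta}^{m_{-1,\beta}}C_{m_{0,\beta}+m_{1,\beta}}^{m_{0,\beta}}=:N$, and (as in the proof of Theorem~\ref{th:Glue}) the sets $\mathcal A_{\mathbf x}$, $\mathbf x\in\mathcal F$, are pairwise disjoint.

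The crux is to estimate, for fixed distinct $\mathbf x,\mathbf x'\in\mathcal F$, the number of $v\in\mathcal A_{\mathbf x}$ for which some $v'\in\mathcal A_{\mathbf x'}$ satisfies $(v,v')=t$; call such $v$ bad with respect to $\mathbf x'$. Decompose $(v,v')=S_A+S_B+S_0$ according to whether a coordinate lies in $A:=\mathrm{supp}(\mathbf x)\cap\mathrm{supp}(\mathbf x')$, in $B:=\mathrm{supp}(\mathbf x)\setminus\mathrm{supp}(\mathbf x')$, or in $M^{\mathbf x}_0=\{i:\mathbf x_i=0\}$. On $M^{\mathbf x}_0$ the vector $v$ has only $m_{1,0}+m_{-1,0}$ nonzero coordinates, so $|S_0|\leqslant m_{1,0}+m_{-1,0}$; and $B\subseteq\{i:\mathbf x'_i=0\}$, where $v'$ has at most $m_{1,0}+m_{-1,0}$ nonzero coordinates, so $|S_B|\leqslant m_{1,0}+m_{-1,0}$. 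Hence $(v,v')=t$ forces $S_A\geqslant t-2(m_{1,0}+m_{-1,0})$, and since $|S_A|\leqslant j:=|\mathrm{supp}(v)\cap A|$ we get $j\geqslant t-2(m_{1,0}+m_{-1,0})$ — a genuine restriction because $t-2(m_{1,0}+m_{-1,0})\geqslant0$. The restriction of $v$ to $\mathrm{supp}(\mathbf x)$ is determined by choosing $\mathrm{supp}(v)\cap A$ (a $j$-subset of $A$) and its positive coordinates ($i$ of them), then $\mathrm{supp}(v)\cap B$ (a $(\sigma-j)$-subset of $B$, $\sigma:=m_{1,1}+m_{-1,1}+m_{1,-1}+m_{-1,-1}$) and its positive coordinates ($\pi-i$ of them, $\pi:=m_{1,1}+m_{1,-1}$); writing $l:=|A|$, so $|B|=m_{-1}+m_1-l$, the number of such restrictions with a fixed $(j,i)$ equals $C_l^j C_j^i C_{m_{-1}+m_1-l}^{\sigma-j}C_{\sigma-j}^{\pi-i}$. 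Summing over $t-2(m_{1,0}+m_{-1,0})\leqslant j\leqslant l$ and $0\leqslant i\leqslant j$, then multiplying by $C_{m_0}^{m_{-1,0}}C_{m_{0,0}+m_{1,0}}^{m_{0,0}}$ — the number of ways to fill $v$ on $M^{\mathbf x}_0$ — bounds the number of bad $v$. Finally, $l$ depends only on the mutual position of $\mathbf x$ and $\mathbf x'$: writing $l=a+b$ with $a$ (resp.\ $b$) the number of coordinates on which $\mathbf x,\mathbf x'$ are both nonzero and equal (resp.\ opposite), one has $a-b=(\mathbf x,\mathbf x')<s$ and $b\leqslant2\min(m_{-1},m_1)$, so $m_{-1}+m_1-m_0\leqslant l<s+4\min(m_{-1},m_1)$ and also $l\leqslant m_{-1}+m_1$; taking the maximum over $l$ in this range of the above double sum gives exactly the quantity $R$. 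Thus at most $C_{m_0}^{m_{-1,0}}C_{m_{0,0}+m_{1,0}}^{m_{0,0}}R$ vectors of $\mathcal A_{\mathbf x}$ are bad with respect to a given $\mathbf x'$.

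To finish, delete from every $\mathcal A_{\mathbf x}$ each vector that is bad with respect to at least one of the fewer than $h(n,m_{-1},m_0,m_1,s)$ other members of $\mathcal F$; then at most $h(n,m_{-1},m_0,m_1,s)\,C_{m_0}^{m_{-1,0}}C_{m_{0,0}+m_{1,0}}^{m_{0,0}}R$ vectors leave each block, so each block retains at least $N-h(n,m_{-1},m_0,m_1,s)\,C_{m_0}^{m_{-1,0}}C_{m_{0,0}+m_{1,0}}^{m_{0,0}}R$ of its $N$ vectors. By the pairwise disjointness of the blocks, the union $W$ of the pruned blocks has at least $h(n,m_{-1},m_0,m_1,s)$ times that many elements, and $W$ is independent in $G_n(k_{-1},k_0,k_1,t)$: inside a pruned block scalar products exceed $t$, while a forbidden pair straddling two pruned blocks is impossible because both of its endpoints were deleted. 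This gives the claimed inequality. I expect the main obstacle to be the middle step: getting the three-way split of the cross scalar product and the exact ranges of $j$ and $l$ right, and verifying that the necessary condition on a bad $v$ (namely $j\geqslant t-2(m_{1,0}+m_{-1,0})$ with the $M^{\mathbf x}_0$-part left unconstrained) really yields an upper bound of the binomial shape of $R$; the rest — disjointness of the blocks, inherited from the proof of Theorem~\ref{th:Glue}, and the remark that deleting both ends of every forbidden cross pair leaves a legitimate independent set of the stated size — is routine once carefully set up.
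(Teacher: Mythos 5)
Your construction, the parametrization of dangerous vectors by $(j,i)$ with $j\geqslant t-2(m_{1,0}+m_{-1,0})$, the count $C_l^jC_j^iC_{m_{-1}+m_1-l}^{\sigma-j}C_{\sigma-j}^{\pi-i}$ times $C_{m_0}^{m_{-1,0}}C_{m_{0,0}+m_{1,0}}^{m_{0,0}}$, the range $m_{-1}+m_1-m_0\leqslant l\leqslant\min(s+4\min(m_{-1},m_1),m_{-1}+m_1)$, and the final assembly all coincide with the paper's argument. The gap is the sentence asserting that the blocks $\mathcal A_{\mathbf x}$, $\mathbf x\in\mathcal F$, are pairwise disjoint ``as in the proof of Theorem~\ref{th:Glue}''. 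There, disjointness was deduced from the two-sided bound: products inside a block exceed $t$, products across blocks are less than $t$; and the cross bound was exactly what the separation $t_1$ (via $t_1+2\,extras+2(m_{1,0}+m_{-1,0})\leqslant t$) provided. Here $\mathcal F$ is only separated at level $s>t_1$, the cross bound is lost, and disjointness is not available — indeed it can fail under the hypotheses. Take $n=16$, $m_{-1}=0$, $m_0=m_1=8$, $m_{1,1}=5$, $m_{0,1}=3$, $m_{0,0}=8$, all other $m_{\alpha,\beta}=0$ (so $k_{-1}=0$, $k_1=5$, $k_0=11$), $t=1$, $t_1=0$, $s=6$: all constraints of Theorems~\ref{th:Glue} and~\ref{th:SuperGlue} hold ($\sum_\beta\mdp=2>t$, $extras=0$), yet for $\mathbf x$, $\mathbf x'$ with supports $\{1,\dots,8\}$ and $\{1,\dots,5,9,10,11\}$ (scalar product $5<s$) the vector $\mathbf u$ with support $\{1,\dots,5\}$ lies in $\mathcal A_{\mathbf x}\cap\mathcal A_{\mathbf x'}$; moreover every $v'\in\mathcal A_{\mathbf x'}$ satisfies $(\mathbf u,v')\geqslant 2>t$, so $\mathbf u$ is not ``bad'' with respect to this pair and your pruning keeps it in both blocks. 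Since all you know about $\mathcal F$ is that its pairwise products are below $s$, nothing rules out such overlaps, and with overlaps the union of the pruned blocks may be smaller than the sum of their sizes, so your final inequality $|W|\geqslant|\mathcal F|\cdot(N-|\mathcal F|\,C_{m_0}^{m_{-1,0}}C_{m_{0,0}+m_{1,0}}^{m_{0,0}}R)$ does not follow. (The independence of your union is fine: an exact-$t$ pair across blocks would have both endpoints deleted; only the size count breaks.)

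The repair is exactly the paper's proof: prune by the condition you actually counted rather than by the existence of an exact-$t$ partner. Keep in the block of $\mathbf x$ only those $\mathbf u$ which, for \emph{every} other $\mathbf y\in\mathcal F$, have fewer than $t-2(m_{1,0}+m_{-1,0})$ nonzero coordinates among the positions where $\mathbf x$ and $\mathbf y$ are both nonzero. Then for $\mathbf u$, $\mathbf v$ in pruned blocks of different $\mathbf x$, $\mathbf y$ one gets $(\mathbf u,\mathbf v)<t$ outright (the coordinates with a zero in $\mathbf x$ or $\mathbf y$ contribute at most $2(m_{1,0}+m_{-1,0})$, the rest strictly less than $t-2(m_{1,0}+m_{-1,0})$), so the pruned sets are simultaneously independent across blocks and pairwise disjoint (product $>t$ inside, $<t$ across), while the number of deleted vectors per block is bounded by precisely the quantity your $(j,i,l)$-computation produces, because $j\geqslant t-2(m_{1,0}+m_{-1,0})$ is exactly the deletion condition. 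With this single change your argument becomes the paper's proof.
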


Данная теорема будет доказана в параграфе \ref{sect:proof-superglue}. Идея доказательства в значительной мере напоминает идею доказательства теоремы \ref{th:Glue}. Однако теперь мы будем использовать совокупность $\mathcal F$, в которой попарные скалярные произведения меньше $s$, причем величина $s$ может оказаться больше $t_1$. Таким образом, в итоговой совокупности окажется некоторое количество ``плохих'' векторов, скалярные произведения которых равны $t$. Поэтому требуется дополнительное действие, чтобы ``проредить'' данную совокупность и удалить все ``плохие'' векторы. Несмотря на простоту самой идеи, оценка размера полученной совокупности оказывается технически более сложной. 

Удивительно, что есть еще один результат, который формулируется и доказывается очень просто, но при этом дает при некоторых значениях параметров лучшие оценки. 
\begin{theorem}\label{th:Pairs}
    Пусть $k_1 + k_{-1}$ и $n$ делятся на $2$, а $t$ \,--\, нечетное число. Тогда 
    $$
        m(n, k_{-1}, k_0, k_1, t) \geqslant C^{(k_1 + k_{-1})/2}_{n/2} C^{k_1}_{k_1 + k_{-1}}.
    $$
\end{theorem}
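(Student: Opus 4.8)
The plan is to exploit a parity argument: since $t$ is odd, it suffices to exhibit a large family $W \subset V_n(k_{-1}, k_0, k_1)$ all of whose pairwise scalar products are \emph{even}. To build it, first partition the coordinate set $\{1, \ldots, n\}$ into $n/2$ consecutive pairs $P_i = \{2i-1, 2i\}$, $i = 1, \ldots, n/2$ — this is where divisibility of $n$ by $2$ enters. Call a vector $\mathbf x \in V_n(k_{-1}, k_0, k_1)$ \emph{admissible} if on every pair $P_i$ its two coordinates are either both zero or both nonzero; equivalently, the set $\{j : x_j = 0\}$ is a union of some of the $P_i$. Let $W$ be the set of all admissible vectors.

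Next I would check that $W$ is independent in $G_n(k_{-1}, k_0, k_1, t)$. For $\mathbf x, \mathbf y \in W$ write $(\mathbf x, \mathbf y) = \sum_{i=1}^{n/2} (x_{2i-1}y_{2i-1} + x_{2i}y_{2i})$ and examine a single summand. If $P_i$ is a zero-pair for $\mathbf x$ or for $\mathbf y$, the summand is $0$. Otherwise all four coordinates lie in $\{-1,1\}$, and writing $\mathbf x|_{P_i} = (a, \varepsilon a)$, $\mathbf y|_{P_i} = (b, \delta b)$ with $a, b, \varepsilon, \delta \in \{-1,1\}$, the summand equals $ab + \varepsilon\delta ab = ab(1 + \varepsilon\delta) \in \{0, 2ab\}$, which is even. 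Hence $(\mathbf x, \mathbf y)$ is a sum of even numbers, so it is even and cannot equal the odd number $t$; thus $W$ is an independent set, and $m(n, k_{-1}, k_0, k_1, t) \geqslant |W|$.

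It remains to count $|W|$. An admissible vector is determined by two independent choices: (i) which pairs $P_i$ carry its nonzero coordinates — since the vector has $k_1 + k_{-1}$ nonzero coordinates occurring in whole pairs, exactly $(k_1 + k_{-1})/2$ pairs are used (here divisibility of $k_1 + k_{-1}$ by $2$ enters), selected in $C^{(k_1+k_{-1})/2}_{n/2}$ ways; and (ii) an assignment of values $\pm 1$ to the $k_1 + k_{-1}$ coordinates in the chosen pairs with exactly $k_1$ of them equal to $1$, done in $C^{k_1}_{k_1+k_{-1}}$ ways, each such assignment producing a legitimate admissible vector (it automatically has $k_0 = n - k_1 - k_{-1}$ zeros), and distinct choices producing distinct vectors. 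Therefore $|W| = C^{(k_1+k_{-1})/2}_{n/2}\, C^{k_1}_{k_1+k_{-1}}$, giving the claimed bound.

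I do not anticipate a genuine obstacle: the only points requiring care are the verification that each per-pair summand is even (the heart of the parity trick) and the observation that the two counting factors are chosen independently, so that the product $C^{(k_1+k_{-1})/2}_{n/2}\, C^{k_1}_{k_1+k_{-1}}$ is attained exactly rather than merely bounded.
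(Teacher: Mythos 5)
Your proposal is correct and follows essentially the same route as the paper: partition the coordinates into $n/2$ pairs, take all vectors whose nonzero coordinates occupy whole pairs, note that each pair contributes an even amount to any scalar product so the odd value $t$ never occurs, and count the family as $C^{(k_1+k_{-1})/2}_{n/2} C^{k_1}_{k_1+k_{-1}}$. The only difference is that you spell out the per-pair parity check and the bijective counting in more detail than the paper does.
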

Эта теорема будет доказана в разделе \ref{sect:Pairs-proof}. Однако по сути идея доказательства тривиальна. Разобьем все множество координат на пары. В итоговую совокупность наберем те векторы, у которых в каждой паре обе координаты нулевые или ненулевые одновременно. Ясно, что в таком случае все возможные попарные скалярные произведения будут четными, и нечетный запрет никогда не реализуется.

\section{Численные результаты}\label{sect:numerical-results}

На рисунках \ref{fig:full-comparison-k_1-0.49} и \ref{fig:full-comparison-k_1-0.25} мы приводим численные результаты теорем \ref{th:FW}--\ref{th:Glue} и \ref{th:Pairs} при $k'_0 = 0.5$. Первый из них касается случая $k_{-1}' = 0.01, k_1' = 0.49$, второй --- случая $k_{-1}' = k_1' = 0.25$. Все оценки из  упомянутых выше теорем могут быть записаны в экспоненциальном виде $(\lambda(k'_{-1}, k'_0, k'_1, t') + o(1))^n$ при $n\to\infty$, поэтому по оси абсцисс откладывается параметр $t'$, а по оси ординат --- константа $\lambda(k'_{-1}, k'_0, k'_1, t')$ в основании экспоненты соответствуюшей оценки. На данных рисунках отсутствует оценка теоремы \ref{th:SuperGlue}, поскольку технически сложно вычислить ее значения для всех возможных $t'$. Вместо этого мы приведем таблицу \ref{tab:full-comparison-k_1-0.495}, в которой показывается, что в узком диапазоне значений данная оценка является лучшей из представленных нижних оценок. 

\begin{figure}[!ht]
\centering
\includegraphics[width=0.75\textwidth]{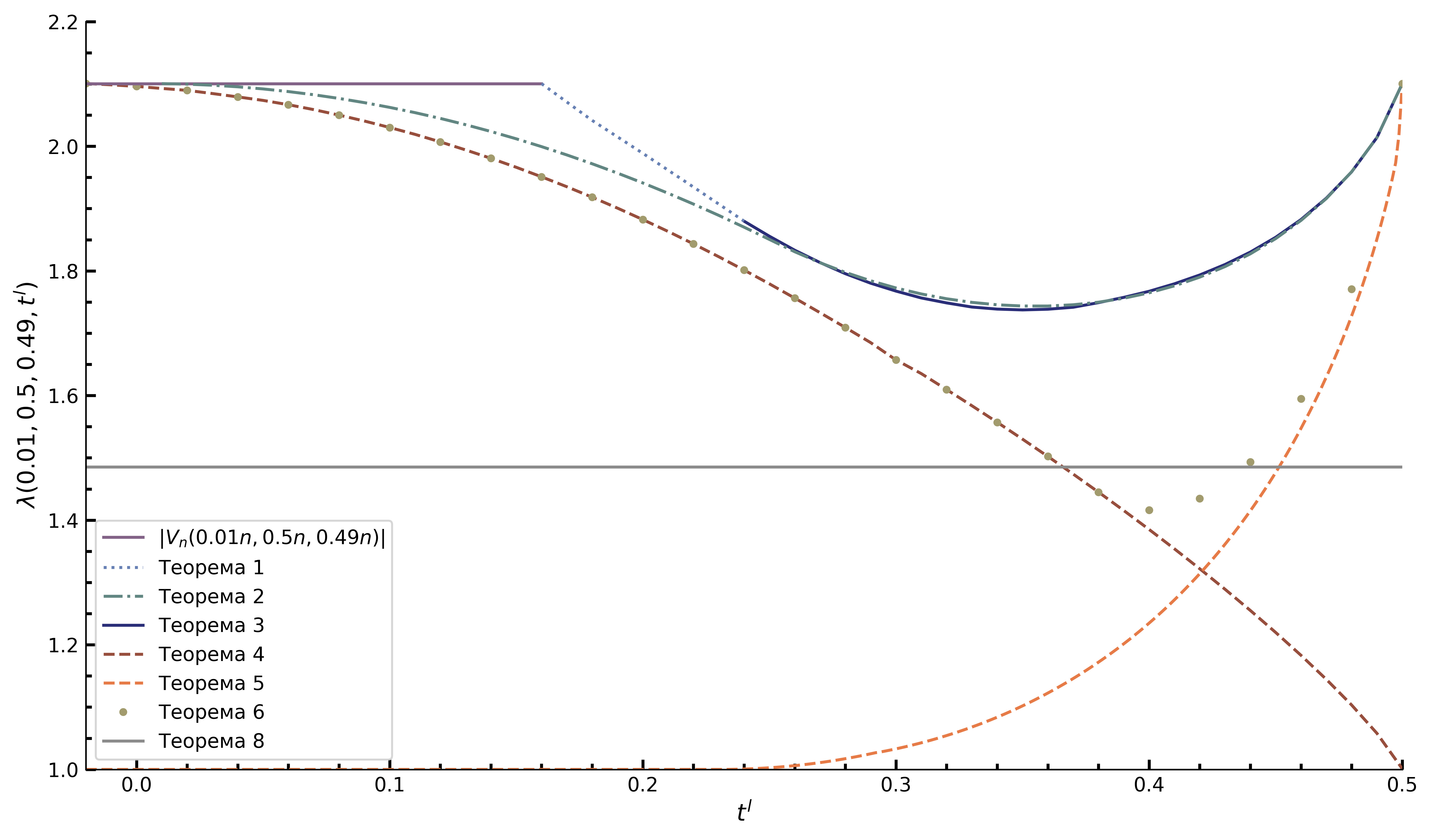}
\caption{Сравнение оценок теорем \ref{th:FW}--\ref{th:Glue} и \ref{th:Pairs} при $k_{-1}' = 0.01, k'_0 = 0.5, k_1' = 0.49$}
\label{fig:full-comparison-k_1-0.49}
\end{figure}

\begin{figure}[!ht]
\centering
\includegraphics[width=0.75\textwidth]{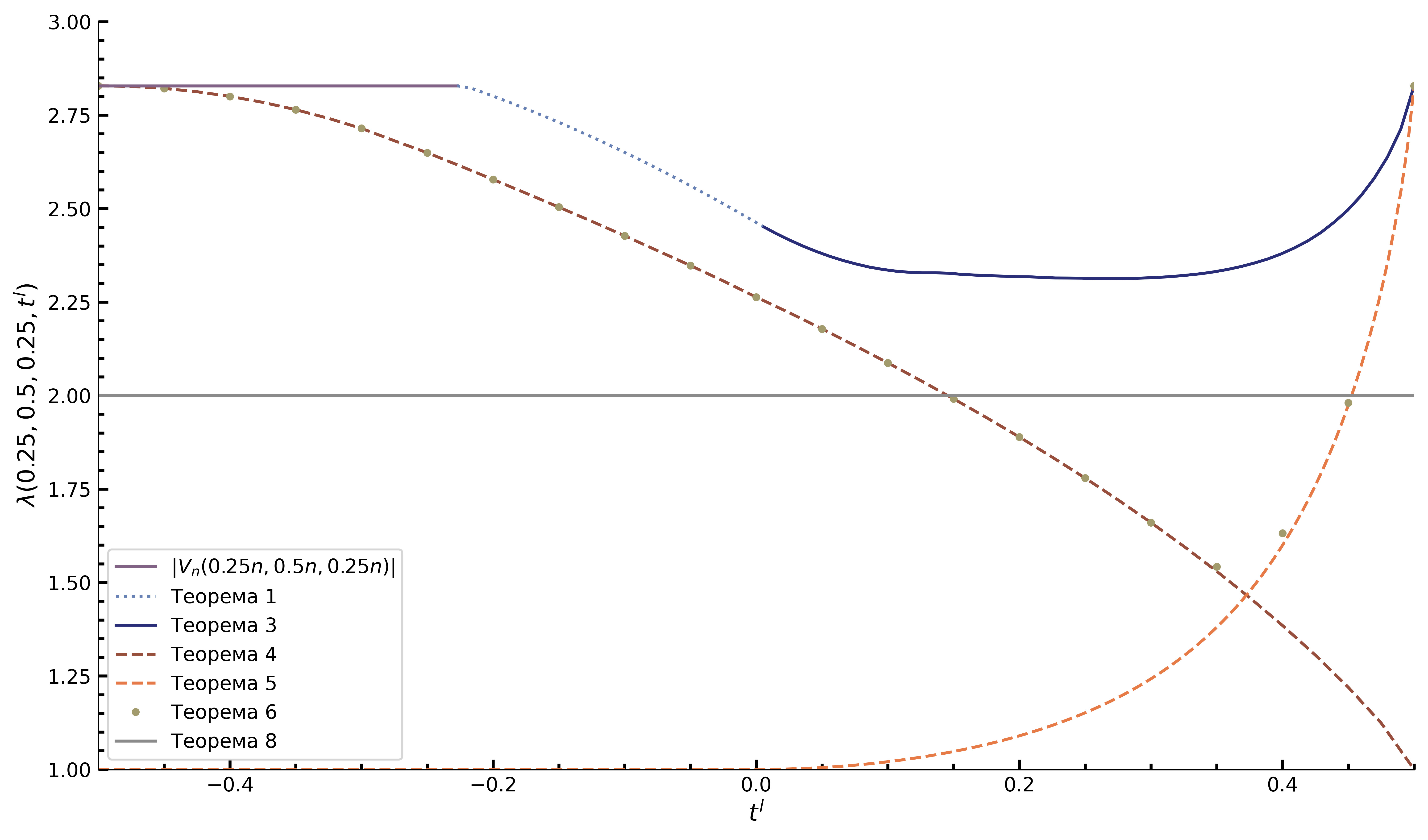}
\caption{Сравнение оценок теорем \ref{th:FW}--\ref{th:Glue} и \ref{th:Pairs} при $k_{-1}' = 0.25, k'_0 = 0.5, k_1' = 0.25$}
\label{fig:full-comparison-k_1-0.25}
\end{figure}

\begin{table}[!ht]
\begin{center}
\begin{tabular}{ | c | c | c | c | c | c | c | c | } 
  \hline
  $t'$ & $0.37$ & $0.373$ & $0.376$ & $0.379$ & $0.382$ & $0.385$ \\ 
  \hline
  Теорема \ref{th:decreasing} & $\mathbf{1.47408}$ & $1.46544$ & $1.45676$ & $1.44806$ & $1.43927$ & $1.43043$ \\ 
  \hline
  Теорема \ref{th:Glue} & $1.47408$ & $1.46544$ & $1.45698$ & $1.44930$ & $1.44231$ & $1.43509$ \\ 
  \hline
  Теорема \ref{th:SuperGlue} & $1.47408$ & $\mathbf{1.46831}$ & $\mathbf{1.46267}$ & $\mathbf{1.45644}$ & $1.45157$ & $1.44618$ \\ 
  \hline
  Теорема \ref{th:Pairs} & $1.45437$ & $1.45437$ & $1.45437$ & $1.45437$ & $\mathbf{1.45437}$ & $\mathbf{1.45437}$ \\ 
  \hline
\end{tabular}
\caption{Сравнение оценок теорем \ref{th:decreasing}, \ref{th:Glue}, \ref{th:SuperGlue} \ref{th:Pairs} при $k_{-1}' = 0.005, k'_0 = 0.5, k_1' = 0.495$}
\label{tab:full-comparison-k_1-0.495}
\end{center}
\end{table}

Обсудим сперва верхние оценки, то есть теоремы \ref{th:FW}--\ref{th:PonRai}. Во-первых, отметим, что при малых значениях $t'$ оценка теоремы \ref{th:FW} оказывается больше числа всех вершин, поэтому на графике мы заменяем ее на $|V_n(k_{-1}, k_0, k_1)|$. Во-вторых, из формулировок теорем \ref{th:FW} и \ref{th:PonRai} видно, что они работают в непересекающихся диапазонах параметра $t'$. Поэтому на графике оценка теоремы \ref{th:FW} плавно переходит в оценку теоремы \ref{th:PonRai}. Наконец, оказывается, что теорема \ref{th:FW-plus-flower} совсем не работает при $k'_1 = k'_{-1}$, именно поэтому мы не приводим ее на рисунке \ref{fig:full-comparison-k_1-0.25}. Но вот при $k_{-1}' = 0.01, k_1' = 0.49$ теорема \ref{th:FW-plus-flower} уступает теореме \ref{th:PonRai} только на отрезке $[0.28, 0.38]$.

Перейдем к нижним оценкам. Ожидаемо оказывается, что конструкция Альсведе--Хачатряна хорошо работает при небольших значениях $t'$, поскольку попарные скалярные произведения в ней заведомо больше $t'n$. Напротив, конструкция Варшамова--Гилберта оптимальна при значениях $t'$, близких к максимальному, ведь векторы из этой совокупности имеют попарные скалярные произведения меньше $t'n$. Теорема \ref{th:Glue} также довольно ожидаемо улучшает обе эти конструкции при средних значениях $t'$, где становится выгодно иметь скалярные произведения и меньше, и больше $t'n$. Довольно примечательно, что специфическая теорема \ref{th:Pairs} в определенном диапазоне значений $t'$ оказывается самой сильной. Разумеется, данная оценка совсем не зависит от параметра $t'$, поэтому на графике изображается просто горизонтальной линией. 

Теорема \ref{th:SuperGlue} дает наилучшие результаты при $k'_1 \gg k'_{-1}$ в окрестности точки, где теорема \ref{th:Pairs} начинает доминировать над теоремой \ref{th:decreasing}. Улучшения есть, но они невелики и верхних оценок, конечно, не достигают. Поэтому в таблице оценки из теорем \ref{th:FW}--\ref{th:PonRai} мы не приводим, как и оценку теоремы \ref{th:VG}, которая на интересующем нас участке очень слаба.

Любопытно, что в случае $(-1, 0, 1)$-векторов верхние оценки устроены несколько иначе, чем при отсутствии минус единиц. Например, в работе \cite{BobuKuprRai} показано, что при $t < k/2$ (здесь $k$ --- количество единиц) мощность конструкции Альсведе--Хачатряна и верхняя оценка, аналогичная оценке теоремы \ref{th:FW}, имеют одинаковую константу в основании экспоненты. В нашем случае по крайней мере при $k_1 = k'_{-1}$ и малых $t'$ верхняя оценка является ни чем иным, как числом всех вершин. Это может быть связано с тонкостями линейно-алгебраического метода: переход к $(-1, 0, 1)$-векторам автоматически увеличивает размерность пространства полиномов в доказательстве и значительно ухудшает оценку. Практически нет сомнений, что как минимум при малых $t'$ оценка теоремы \ref{th:FW} не является оптимальной и может быть улучшена. При больших же $t'$ доминирующей становится оценка теоремы \ref{th:PonRai}. К сожалению, зазор между ней и нижними оценками по-прежнему остается экспоненциальным для всех значений $t'$, кроме тривиального.

\section{Доказательства}\label{sect:proofs}

\subsection{Лемма о минимальном скалярном произведении}\label{sect:lem-mdp}

\begin{lemma}\label{lem:mdp}
    Минимальное скалярное произведение векторов из $V_n(k_1, k_0, k_1)$ вычисляется как
    $$
      \mdp(k_{-1}, k_0, k_1) =
        \begin{cases}
          -2\min(k_{-1}, k_1), \: \text{если $k_0 \geqslant \max(k_{-1}, k_1) - \min(k_{-1}, k_1)$,} \\
          \max(k_{-1}, k_1)  - 3\min(k_{-1}, k_1) - k_0, \: \text{иначе.}
        \end{cases}       
    $$
\end{lemma}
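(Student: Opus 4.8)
The plan is to turn the computation of $\mdp$ into a small transportation problem. For vectors $\mathbf x, \mathbf y \in V_n(k_{-1}, k_0, k_1)$ and $\alpha, \beta \in \{-1, 0, 1\}$ set $a_{\alpha\beta} = |\{i \colon x_i = \alpha,\ y_i = \beta\}|$. These nine nonnegative integers obey the marginal relations $a_{\alpha, -1} + a_{\alpha, 0} + a_{\alpha, 1} = k_\alpha$ and $a_{-1, \beta} + a_{0, \beta} + a_{1, \beta} = k_\beta$, and conversely any nonnegative integer matrix with these row and column sums is realized by some pair of vectors (partition the $n$ coordinates into nine blocks of the prescribed sizes and on the block indexed by $(\alpha,\beta)$ put $x_i = \alpha$, $y_i = \beta$). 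Since $x_i y_i = 1$ exactly when $x_i = y_i \in \{-1, 1\}$ and $x_i y_i = -1$ exactly when $\{x_i, y_i\} = \{-1, 1\}$, one gets $(\mathbf x, \mathbf y) = a_{1,1} + a_{-1,-1} - a_{1,-1} - a_{-1,1}$. Hence $\mdp(k_{-1}, k_0, k_1)$ is the minimum of this linear functional over all such matrices; since the margins are invariant under swapping the labels $-1$ and $1$, I may assume $k_{-1} \leqslant k_1$ and write $m = k_{-1}$, $M = k_1$ for brevity.

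For the lower bound I would use four elementary inequalities. Each of $a_{1,-1}$ and $a_{-1,1}$ is bounded by the corresponding row and column sum, so $a_{1,-1}, a_{-1,1} \leqslant \min(k_{-1}, k_1) = m$. Trivially $a_{-1,-1} \geqslant 0$. Finally, the row $x = 1$ has total $M$, while the columns $y = -1$ and $y = 0$ together can absorb at most $k_{-1} + k_0 = m + k_0$ of its entries, so $a_{1,1} \geqslant \max(0,\ M - m - k_0)$. Adding these up,
$$
    (\mathbf x, \mathbf y) = a_{1,1} + a_{-1,-1} - a_{1,-1} - a_{-1,1} \geqslant \max(0,\ M - m - k_0) - 2m,
$$
which equals $-2m$ when $k_0 \geqslant M - m$ and equals $M - 3m - k_0$ otherwise — precisely the two branches of the asserted formula (recall $M - m = \max(k_{-1},k_1) - \min(k_{-1},k_1)$).

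It remains to show the bound is attained. In both regimes I would take $a_{1,-1} = a_{-1,1} = m$ and $a_{-1,-1} = 0$ (which forces $a_{-1,0} = a_{0,-1} = 0$), so that $(\mathbf x,\mathbf y) = a_{1,1} - 2m$, and then complete the matrix so that $a_{1,1}$ is as small as the margins allow. When $k_0 \geqslant M - m$ the completion $a_{1,1} = 0$, $a_{1,0} = a_{0,1} = M - m$, $a_{0,0} = k_0 - (M-m)$ has all entries nonnegative and the right margins, giving $(\mathbf x,\mathbf y) = -2m$. When $k_0 < M - m$ the completion $a_{1,1} = M - m - k_0$, $a_{1,0} = a_{0,1} = k_0$, $a_{0,0} = 0$ again has all entries nonnegative and the right margins, giving $(\mathbf x,\mathbf y) = M - 3m - k_0$. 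By the realizability remark above, each matrix comes from an actual pair of vectors, so the two values are achieved.

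The argument is essentially self-contained. The only points needing a little care are the clean passage to the transportation problem (in particular the realizability of an arbitrary nonnegative integer matrix with prescribed margins), the symmetry reduction to $k_{-1} \leqslant k_1$, and the routine verification that each of the two explicit completions is feasible in exactly the claimed regime; I do not expect any genuine obstacle beyond this bookkeeping.
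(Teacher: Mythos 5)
Your proof is correct and takes essentially the same route as the paper's: the same two-case split on $k_0 \gtrless \max(k_{-1},k_1)-\min(k_{-1},k_1)$, the same elementary bounds (the cross terms contribute at least $-2\min(k_{-1},k_1)$, and when $k_0$ is small at least $\max(k_{-1},k_1)-\min(k_{-1},k_1)-k_0$ coincident ones are forced), and the same extremal configuration for attainment, which in the paper is written out as an explicit pair of vectors rather than a contingency table with prescribed margins. Your transportation-problem phrasing merely makes the bookkeeping (and the reduction to $k_{-1}\leqslant k_1$) more explicit; no substantive difference.
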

\begin{proof}
    Для простоты будем сначала считать, что $k_1 > k_{-1}$, противоположный случай рассматривается аналогично. При этом предположении условие $k_0 \geqslant \max(k_{-1}, k_1) - \min(k_{-1}, k_1)$ переписывается как $k_0 \geqslant k_1 - k_{-1}$. Разберем два возможных варианта. 

    \textbf{Случай 1:} $k_0 \geqslant k_1 - k_{-1}$. 
    
    Для начала заметим, что скалярное произведение двух векторов из $V_n(k_{-1}, k_0, k_1)$ не может быть меньше $-2\min(k_{-1}, k_1)$. Если $k_0 \geqslant k_1 - k_{-1}$, то минимальное скалярное произведение действительно реализуется, например, на следующей паре векторов:
    \begin{align*}
        \mathbf x &= \{\underbrace{-1, \ldots, -1}_{k_{-1}}, \underbrace{0, 0, \ldots, 0}_{k_0}, \underbrace{1, 1, 1, 1, \ldots, 1}_{k_1}\}; \\
        \mathbf y &= \{\underbrace{1, 1, 1, 1, \ldots, 1}_{k_1}, \underbrace{0, 0,  \ldots, 0}_{k_0}, \underbrace{-1, \ldots, -1}_{k_{-1}}\}.
    \end{align*}
    Заметим, что условие $k_0 \geqslant k_1 - k_{-1}$ существенно. Если оно не выполняется, то ``лишние'' единицы оказываются друг напротив друга, и скалярное произведение увеличивается. 

    \textbf{Случай 2:} $k_0 < k_1 - k_{-1}$. 

    Очевидно, в этом случае будет как минимум $k_1 - k_{-1} - k_0$ координат, которые равны 1 для обоих векторов. Минимально возможный вклад других координат в итоговое скалярное произведение по-прежнему равен $-2\min(k_{-1}, k_1) = -2k_{-1}$. Таким образом, итоговое скалярное произведение не может быть меньше $k_1 - k_{-1} - k_0 - 2k_{-1} = k_1 - 3k_{-1} - k_0$. Оно также достигается на векторах $\mathbf x$ и $\mathbf y$, приведенных выше. 

    При $k_1 \leqslant k_{-1}$ все приведенные выше рассуждения проводятся аналогично. 
\end{proof}

\subsection{Доказательство теоремы \ref{th:FW-plus-flower}}\label{sect:FW-plus-flower-proof}

Идея доказательства данной теоремы достаточно проста и во многом уже была реализована в работе \cite{Rai}. Она состоит в том, чтобы ``зафиксировать'' произвольные $k_{-1}$ координат в множестве векторов, в котором не реализуется интересующий нас запрет. Оставшаяся совокупность будет состоять по сути из $(0,1)$-векторов, и к ней уже можно применить стандартные верхние оценки, получаемые при помощи линейно-алгебраического метода. Тем самым мы избавляемся от автоматического ухудшения оценок линейно-алгебраического метода при применении к $(-1, 0, 1)$-векторам. 

Итак, пусть в некотором множестве $(-1, 0, 1)$-векторов $\mathcal W \subset V_n(k_{-1}, k_0, k_1)$ не реализуется скалярное произведение $t$. Для произвольного $k_{-1}$-элементного подмножества координат $M$ рассмотрим совокупность $\mathcal W_M$, состоящую из векторов $\mathbf{x} = (x_1, \ldots, x_n) \in W$ таких, что их минус единицы в точности совпадают с множеством $M$, то есть 
$$
    \mathcal W_M = \bigl \{(x_1, \ldots, x_n) \in \mathcal W \colon \{i \colon x_i = -1\}  = M \bigr\}.
$$
Рассмотрим теперь $(k_1 + k_0)$-мерные векторы, полученные из векторов совокупности $\mathcal W_M$ выкидыванием фиксированных координат из множества $M$. Очевидно, эти векторы имеют $k_1$ единиц и не имеют минус единиц. Кроме того, в этой совокупности не реализуется запрет $t - k_{-1}$, поскольку вклад координат из $M$ в скалярное произведение равен в точности $k_{-1}$. 

Поэтому в первом случае теоремы применим классическую теорему Франкла и Уилсона из работы \cite{FW}:
$$
    |\mathcal W_M| \leqslant \sum\limits_{i = 0}^{q-1} C_{k_1 + k_0}^i,
$$
где $q = k_1 - (t - k_{-1})$. В противоположном случае применим оценку теоремы 2 из работы \cite{PonRai1}:
$$
    |W_M| \leqslant \frac{ C^{d_2 + 2r}_{n_1} C^{d_1}_{n - k_{-1}}}{ C^{d_2 + r}_{k_2} C^{r}_{n_1 - k_2} C^{d_1}_{k_1} } \left(\sum\limits_{i = 0}^{q-1} C_{n_1}^i\right),
$$
где числа $d_1$ и $d_2$ определены в условиях теоремы. 

Остается лишь заметить, что 
$$
    |\mathcal W| \leqslant C_n^{k_{-1}} |W_M|.
$$
Доказательство теоремы завершено.

\subsection{Доказательство теоремы \ref{th:VG}}\label{sect:VG-proof}

Основная идея доказательства состоит в достаточно простом применении вероятностного метода. Для этого сперва вычислим вероятность того, что два произвольных вектора из $V_n(k_{-1}, k_0, k_1)$ имеют скалярное произведение хотя бы $t$. Зафиксируем произвольный вектор $\mathbf{x} \in V_n(k_{-1}, k_0, k_1)$ и покажем, что количество векторов $\mathbf y$, для которых $(\mathbf{x}, \mathbf{y}) \geqslant t$, равно в точности $d(n, k_{-1}, k_0, k_1, t)$. 

При $\alpha, \beta \in \{-1, 1\}$ обозначим $l_{\alpha, \beta}$ количество координат, которые равны $\alpha$ в векторе $\mathbf x$ и $\beta$ в векторе $y$ с $(\mathbf{x}, \mathbf{y}) \geqslant t$. Посчитаем количество способов выбрать вектор $\mathbf y$ при фиксированных $l_{\alpha, \beta}$. 
\begin{enumerate}
    \item Среди $k_1$ единиц вектора $\mathbf x$ выберем сначала $l_{1,1}$ общих единиц, а затем $l_{1, -1}$ координат, равных $-1$ в векторе $\mathbf y$. Очевидно, число таких способов равно $C^{l_{1,1}}_{k_1} C^{l_{1,-1}}_{k_1-l_{1,1}}$. 
    \item Аналогично среди $k_{-1}$ отрицательных координат вектора $\mathbf x$ выберем единицы и минус единицы вектора $\mathbf y$. Число способов это сделать \\ $C^{l_{-1,-1}}_{k_{-1}} C^{l_{-1,1}}_{k_{-1}-l_{-1,-1}}$. 
    \item Наконец, среди $k_0$ нулей вектора $\mathbf x$ необходимо сперва разместить $k_{1} - l_{1,1} - l_{-1,1}$ единиц вектора $y$. Среди оставшихся $k_{0} - k_{1} + l_{1,1} + l_{-1,1}$ координат будет $k_{-1} - l_{-1,-1} - l_{1,-1}$ минус единиц вектора $\mathbf y$. Число способов выбора равно $C^{k_{1} - l_{1,1} - l_{-1,1}}_{k_{0}} C^{k_{-1} - l_{-1,-1} - l_{1,-1}}_{k_{0} - k_{1} + l_{1,1} + l_{-1,1}}$. 
\end{enumerate}

Поскольку для позиций нулей вектора $\mathbf y$ существует лишь единственный способ, общее количество способов выбрать вектор $\mathbf y$ при фиксированных $l_{\alpha, \beta}$ равно
\begin{multline*}
    C^{l_{1,1}}_{k_1} C^{l_{1,-1}}_{k_1-l_{1,1}} C^{l_{-1,-1}}_{k_{-1}} C^{l_{-1,1}}_{k_{-1}-l_{-1,-1}} C^{k_{1} - l_{1,1} - l_{-1,1}}_{k_{0}} C^{k_{-1} - l_{-1,-1} - l_{1,-1}}_{k_{0} - k_{1} + l_{1,1} + l_{-1,1}} \times \\ 
    \times P(l_{1,1}, l_{-1,1}, l_{-1,-1}, l_{1,-1}, t).
\end{multline*}
Заметим, что параметры $l_{\alpha, \beta}$ однозначно устанавливают скалярное произведение пары векторов. Поэтому индикатор $P(l_{1,1}, l_{-1,1}, l_{-1,-1}, l_{1,-1}, t)$ показывает, что данный набор $l_{\alpha, \beta}$ действительно гарантирует $(\mathbf{x}, \mathbf{y}) \geqslant t$. Наконец, совокупность $\mathcal B$ описывает все возможные наборы параметров $l_{\alpha, \beta}$. Таким образом, общее количество способов выбрать вектор $\mathbf y$, дающий  $(\mathbf{x}, \mathbf{y}) \geqslant t$, действительно равно $d(n, k_{-1}, k_0, k_1, t)$.

Следовательно, вероятность, что два случайных вектора из $V_n(k_{-1}, k_0, k_1)$ имеют скалярное произведение не меньше $t$, равна
$$
    p = \frac{d(n, k_{-1}, k_0, k_1, t)}{|V_n(k_{-1}, k_0, k_1)|}.
$$
Пусть теперь $\mathcal{S}$ --- наибольшее множество, не содержащее пар векторов со скалярным произведением как минимум $t$. Рассмотрим  случайный вектор $\mathbf x \in V_n(k_{-1}, k_0, k_1)$. Для любого $\mathbf y \in \mathcal S$ вероятность того, что $(\mathbf x, \mathbf y) \geqslant t$, равна $p$. 

Пусть $q$ --- вероятность того, что $\mathbf x$ имеет скалярное произведение как минимум $t$ хотя бы с одним представителем $\mathcal{S}$. Данную вероятность можно оценить сверху: 
$$
    q \leqslant |\mathcal{S}|p.
$$
Очевидно, что при $|\mathcal{S}| < \lceil \frac 1p \rceil$
$$
    q \leqslant |\mathcal{S}|p < 1.
$$
Поскольку данная вероятность строго меньше 1, найдется вектор из $V_n(k_{-1}, k_0, k_1)$, скалярное произведение которого со всеми представителями $\mathcal S$ меньше $t$. Очевидно, этот вектор не может принадлежать $S$, иначе он имел бы максимальное скалярное произведение с самим собой. То есть $\mathcal{S}$ можно увеличить, что противоречит его определению. Отсюда $|\mathcal{S}| \geqslant \lceil \frac 1p \rceil$, то есть 
$$
    h(n, k_{-1}, k_0, k_1, t) \geqslant \left\lceil \frac 1p \right\rceil.
$$
Неравенство $m(n, k_{-1}, k_0, k_1, t) \geqslant h(n, k_{-1}, k_0, k_1, t)$ тривиально. 

\subsection{Доказательство теоремы \ref{th:Glue}}\label{sect:Glue-proof}

Для начала заметим, что по теореме \ref{th:VG} существует такая совокупность $\mathcal F$  векторов из $V_n(m_{-1}, m_0, m_1)$, что для любых двух $\mathbf x, \mathbf y \in \mathcal F$ выполнено $(\mathbf x, \mathbf y) < t_1$, при этом $|\mathcal F| = h(n,m_{-1}, m_0, m_1, t_1)$. 

Для фиксированного вектора $\mathbf{x} = (x_1, \ldots, x_n) \in \mathcal F$ обозначим $\mathcal  W_{\mathbf x}$ совокупность таких векторов $\mathbf{u}=(u_1, \ldots, u_n) \in V_n(k_{-1}, k_0, k_1)$, что
$$
    \left|\bigl\{i \in \{1, \ldots, n\} \colon u_i = \alpha \wedge x_i = \beta \bigr\}\right| = m_{\alpha, \beta}, \text{ где $\alpha, \beta \in \{-1, 0, 1\}$}.
$$
Отметим, что сам вектор $\mathbf x$ не обязательно принадлежит к совокупности $\mathcal W_{\mathbf x}$.  Положим $\displaystyle \mathcal W = \bigcup_{\mathbf{x} \in \mathcal{F}} \mathcal W_{\mathbf{x}}$ и убедимся, что в совокупности $\mathcal W$ скалярное произведение любых двух векторов не равняется $t$. 

Из ограничения на параметры $ \sum^1_{\beta = -1} \mdp(m_{-1, \beta}, m_{0, \beta}, m_{1, \beta}) > t$ следует, что для любого  $\mathbf{x} \in \mathscr{F}$ и любых векторов $\mathbf{u}, \mathbf{v} \in \mathcal W_{\mathbf{x}}$ выполнено $(\mathbf{u}, \mathbf{v}) > t$ (доказательство этого факта полностью почти дословно повторяет комментарий к теореме \ref{th:decreasing}, поэтому мы опустим его). 

Зафиксируем теперь различные $\mathbf{x}, \: \mathbf{y} \in \mathcal F$. Покажем, что для любых $\mathbf{u} \in \mathcal W_{\mathbf{x}} , \: \mathbf{v} \in \mathcal W_{\mathbf{y}}$ выполнено $(\mathbf{u}, \mathbf{v}) < t$. Положим
\begin{align*}
    L_{\alpha, \beta} &= \{i \colon x_i = \alpha \land y_i = \beta\}, \quad \alpha, \beta \in \{-1, 0, 1\}; \\
    l_{\alpha, \beta} &= \left|L_{\alpha, \beta}\right|, \quad \alpha, \beta \in \{-1, 0, 1\}.
\end{align*}
Оценим максимально возможное скалярное произведение векторов $\mathbf u$ и $\mathbf v$. Чтобы сделать это, оценим максимально возможный вклад в итоговое скалярное произведение в каждой из частей $L_{\alpha, \beta}$. 
\begin{enumerate}
    \item \textbf{Множества $L_{1,1}$ и $L_{-1,-1}$.} Скалярное произведение $\mathbf{u}$ и $\mathbf{v}$ среди координат $L_{1,1}$ не превосходит общего количества этих координат, то есть
    \begin{equation}\label{eq:L11}
        \sum\limits_{i \in L_{1, 1}} u_i v_i \leqslant l_{1, 1}.
    \end{equation}
    Аналогично 
    \begin{equation}\label{eq:L-1-1}
        \sum\limits_{i \in L_{-1, -1}} u_i v_i \leqslant l_{-1, -1}.
    \end{equation}
    \item \textbf{Множества $L_{1,-1}$ и $L_{-1,1}$.} Рассмотрим сначала случай множества $L_{1,-1}$ и представим три способа оценить вклад этих координат в скалярное произведение. Во-первых, заметим, что $l_{1, -1} \leqslant \min(m_{-1}, m_1)$, то есть $\min(m_{-1}, m_1) - l_{1, -1} \geqslant 0$. Поэтому 
    \begin{gather*}
        \sum\limits_{i \in L_{1, -1}} u_i v_i \leqslant l_{1, -1} \leqslant \min(m_{-1}, m_1) \leqslant  \\ \leqslant \min(m_{-1}, m_1) + \min(m_{-1}, m_1) - l_{1, -1} = -l_{1, -1} + 2\min(m_{-1}, m_1). 
    \end{gather*}
    Этот же вклад в скалярное произведение можно оценить и другим способом. Вектор $\mathbf u$ имеет самое большее $m_{-1, 1}$ минус единиц в множестве $L_{1, -1}$, а вектор $\mathbf v$ максимум $m_{-1, -1}$ минус единиц в том же множестве. Таким образом, количество координат из $L_{1, -1}$, равных $-1$ в векторах $\mathbf u$ и $\mathbf v$ одновременно, не превышает $\min(m_{-1, 1}, m_{-1, -1})$. Аналогично количество общих единиц в этих векторах не больше $\min(m_{1, 1}, m_{1, -1})$. Вклад в скалярное произведение можно оценить суммой этих двух величин: 
    \begin{equation}\label{eq:second-bound-L1-1}
        \sum\limits_{i \in L_{1, -1}} u_i v_i \leqslant \min(m_{-1, 1}, m_{-1, -1}) + \min(m_{1, 1}, m_{1, -1}).
    \end{equation}
    Используя уже известное неравенство  $\min(m_{-1}, m_1) - l_{1, -1} \geqslant 0$, имеем 
    \begin{multline}
        \sum\limits_{i \in L_{1, -1}} u_i v_i \leqslant \min(m_{-1, 1}, m_{-1, -1}) + \min(m_{1, 1}, m_{1, -1}) + \\
        + \min(m_{-1}, m_1) - l_{1, -1}.\label{eq:second-bound-L1-1-step2}
    \end{multline}
    Третий способ оценки модифицирует предыдущий. Заметим, что у вектора $\mathbf u$ не менее $l_{1,-1} - m_{1, 1} - m_{0, 1}$ минус единиц, а у вектора $\mathbf v$ не менее $l_{1,-1} - m_{-1, -1} - m_{0, -1}$ единиц в $L_{1, -1}$. Следовательно, у этой пары векторов не менее 
    \begin{multline*}
        l_{1,-1} - m_{1, 1} - m_{0, 1} + l_{1,-1} - m_{-1, -1} - m_{0, -1} - l_{1, -1} = \\ l_{1,-1} - m_{1, 1} - m_{0, 1} - m_{-1, -1} - m_{0, -1}
    \end{multline*}
    координат с разным знаком. Они, разумеется, вносят отрицательный вклад в итоговое скалярное произведение. Аналогичное рассуждение легко провести для единиц вектора $\mathbf u$ и минус единиц вектора $\mathbf v$ в $L_{1, -1}$. Улучшая неравенство \eqref{eq:second-bound-L1-1}, получаем
    \begin{multline*}
       \sum\limits_{i \in L_{1, -1}} u_i v_i \leqslant \min(m_{-1, 1}, m_{-1, -1}) + \min(m_{1, 1}, m_{1, -1}) -\\
        - \max(0, l_{1,-1} - m_{1, 1} - m_{0, 1} - m_{-1, -1} - m_{0, -1}, l_{1,-1} - m_{-1, 1} - m_{0, 1} - m_{1, -1} - m_{0, -1}). 
    \end{multline*}
    Оценивая максимум через максимум второго и третьего аргументов, имеем
    \begin{align*}
        \sum\limits_{i \in L_{1, -1}} u_i v_i &\leqslant \min(m_{-1, 1}, m_{-1, -1}) + \min(m_{1, 1}, m_{1, -1}) +\\
        + \min(m_{1, 1} &+ m_{-1, -1}, m_{-1, 1} + m_{1, -1}) + m_{0, 1} + m_{0, -1} - l_{1,-1}. 
    \end{align*}
    Выбирая меньшую из трех оценок, получаем в точности 
    $$
        -l_{1, -1} + extras(m_{0, -1}, m_{0, 1}, m_{1, -1}, m_{1, 1},  m_{-1, -1}, m_{-1, 1}, m_{-1}, m_1),
    $$ 
    поэтому окончательно
    \begin{multline}
        \sum\limits_{i \in L_{1, -1}} u_i v_i \leqslant -l_{1, -1} + \\
        + extras(m_{0, -1}, m_{0, 1}, m_{1, -1}, m_{1, 1},  m_{-1, -1}, m_{-1, 1}, m_{-1}, m_1). \label{eq:L1-1}
    \end{multline}
    Аналогично, 
    \begin{multline}
        \sum\limits_{i \in L_{-1, 1}} u_i v_i \leqslant -l_{-1, 1} + \\ 
        + extras(m_{0, -1}, m_{0, 1}, m_{1, -1}, m_{1, 1},  m_{-1, -1}, m_{-1, 1}, m_{-1}, m_1). \label{eq:L-11}
    \end{multline}
    \item \textbf{Множество $L_0 = \bigcup\limits_{\alpha = -1}^1 L_{\alpha,0} \cup \bigcup\limits_{\beta = -1}^1 L_{0, \beta}$.} В данном множестве координат по крайней мере у одного из векторов $\mathbf x$ или $\mathbf y$ координата равна нулю. В таком случае вклад этих координат в скалярное произведение можно оценить достаточно грубо: 
    \begin{equation}\label{eq:L0}
        \sum\limits_{i \in L_0} u_i v_i \leqslant 2(m_{1,0} + m_{-1, 0}). 
    \end{equation}
    На этом разбор случаев завершается. 
\end{enumerate}

Оценим теперь итоговое скалярное произведение. Собирая вместе неравенства \eqref{eq:L11}, \eqref{eq:L-1-1} и \eqref{eq:L1-1}--\eqref{eq:L0}, получаем, что 
\begin{multline*}
    (\mathbf u, \mathbf v) \leqslant l_{1, 1} + l_{-1, -1} - l_{1, -1} - l_{-1, 1} + \\ 2 \cdot extras(m_{0, -1}, m_{0, 1}, m_{1, -1}, m_{1, 1},  m_{-1, -1}, m_{-1, 1}, m_{-1}, m_1)+ 2(m_{1,0} + m_{-1, 0}). 
\end{multline*}
Однако по построению совокупности $\mathcal F$ верно, что $l_{1, 1} + l_{-1, -1} - l_{1, -1} - l_{-1, 1} < t_1$. Таким образом, 
$$
    (\mathbf u, \mathbf v) < t_1 + 2 \cdot extras(m_{0, -1}, m_{0, 1}, m_{1, -1}, m_{1, 1},  m_{-1, -1}, m_{-1, 1}, m_{-1}, m_1) + 2(m_{1,0} + m_{-1, 0}).
$$
Остается лишь заметить, что по условию теоремы правая часть не превышает $t$, поэтому 
$$
    (\mathbf u, \mathbf v) < t,
$$
как и требовалось. Кроме того, из данного неравенства автоматически следует, что совокупности $\mathcal W_{\mathbf x}$ и $\mathcal W_{\mathbf y}$ не пересекаются для различных $\mathbf x$ и $\mathbf y$. 

Остается заметить, что величина $|\mathcal W_{\mathbf x}|$ не зависит от $\mathbf x$, и 
$$
|\mathcal W_{\mathbf x}| = \prod^1_{\beta = -1} C^{m_{-1, \beta}}_{m_{\beta}} C^{m_{0, \beta}}_{m_{0, \beta} + m_{1, \beta}} \text{ для любого $\mathbf x \in \mathcal F$}. 
$$
Мы опустим эти вычисления, поскольку они, во-первых, совершенно тривиальны, а во-вторых, могут быть найдены в доказательстве теоремы \ref{th:decreasing} в работе \cite{MosRai}. Таким образом, 
$$
    |\mathcal W| = |\mathcal F| \cdot |\mathcal W_{\mathbf x}| = h(n,m_{-1}, m_0, m_1, t_1) \prod^1_{\beta = -1} C^{m_{-1, \beta}}_{m_{\beta}} C^{m_{0, \beta}}_{m_{0, \beta} + m_{1, \beta}}.
$$

\subsection{Доказательство теоремы \ref{th:SuperGlue}}\label{sect:proof-superglue}

Как и в случае прошлого доказательства, теорема \ref{th:VG} гарантирует наличие такой совокупности $\mathcal F$ векторов из \\ $V_n(m_{-1}, m_0, m_1)$, что для любых двух $\mathbf x, \mathbf y \in \mathcal F$ выполнено $(\mathbf x, \mathbf y) < s$, при этом $|\mathcal F| = h(n,m_{-1}, m_0, m_1, s)$.

Аналогично предыдущему доказательству, для фиксированного вектора $\mathbf{x} = (x_1, \ldots, x_n) \in \mathcal F$ обозначим $\mathcal  W_{\mathbf x}$ совокупность таких векторов $\mathbf{u}=(u_1, \ldots, u_n) \in V_n(k_{-1}, k_0, k_1)$, что
$$
    \left|\bigl\{i \in \{1, \ldots, n\} \colon u_i = \alpha \wedge x_i = \beta \bigr\}\right| = m_{\alpha, \beta}, \text{ где $\alpha, \beta \in \{-1, 0, 1\}$}.
$$

Однако итоговую совокупность, в которой не будет встречаться запрета $t$, будем формировать несколько иначе. А именно, обозначим
\begin{multline*}
    \mathcal S_{\mathbf x} = \bigcap\limits_{\mathbf y \in \mathcal F} \{ \mathbf u \in \mathcal W_{\mathbf x} \colon \mathbf u \text{ имеет менее $t - 2(m_{1,0} + m_{-1,0})$ ненулевых координат} \\ 
    \text{ среди координат, отличных от нуля в $\mathbf x$ и $\mathbf y$ одновременно} \}. 
\end{multline*}
Итоговая совокупность будет определяться как 
$$
    \mathcal S = \bigcup\limits_{\mathbf x \in \mathcal F} \mathcal S_{\mathbf x}.
$$

Как и в случае доказательства теоремы \ref{th:Glue}, векторы из одной подсовокупности $\mathcal S_{\mathbf x}$ имеют попарные скалярные произведения строго больше $t$. Несложно также показать, что если $\mathbf u \in \mathcal S_{\mathbf x}$, а $\mathbf v \in \mathcal S_{\mathbf y}$, то $(\mathbf u, \mathbf v) < t$. Разделим множество координат на два множества. Обозначим $L_0$ множество координат, равных нулю по крайней мере в одном из векторов $\mathbf x$ или $\mathbf y$, и $L^*_0$ --- множество всех остальных координат. Аналогично доказательству предыдущей теоремы, 
$$
    \sum\limits_{i \in L_0} u_i v_i \leqslant 2(m_{1,0} + m_{-1, 0}). 
$$
С другой стороны, построение совокупности $\mathcal S_{\mathbf x}$ гарантирует, что 
$$
    \sum\limits_{i \in L^*_0} u_i v_i < t - 2(m_{1,0} + m_{-1, 0}). 
$$
Отсюда 
$$
    (\mathbf u, \mathbf v) \leqslant \sum\limits_{i \in L_0} u_i v_i + \sum\limits_{i \in L^*_0} u_i v_i < t,
$$
как и требовалось. Таким образом, никакие два вектора из совокупности $\mathcal S$ не дают скалярное произведение $t$. Кроме того, отсюда следует, что совокупности $\mathcal S_{\mathbf x}$ и $\mathcal S_{\mathbf y}$ не пересекаются при различных $\mathbf x$ и $\mathbf y$.

Вычислим размер совокупности $\mathcal S$. Для этого покажем, что 
\begin{multline*}
    |\mathcal W_{\mathbf x} \backslash \mathcal S_{\mathbf x}| \leqslant h(n, m_{-1}, m_0, m_1, s) C_{m_0}^{m_{-1, 0}} C_{m_{0,0} + m_{1,0}}^{m_{0,0}} \times \\ 
    \times \max\limits_{m_{-1} + m_1 - m_0 \leqslant l \leqslant \min(s + 4\min(m_{-1}, m_1), m_{-1} + m_1)} \sum\limits_{j = t - 2(m_{1,0} + m_{-1, 0})}^l \sum\limits_{i=0}^j C_l^j C_j^i \times \\
\times C_{m_{-1} + m_1 - l}^{m_{1, 1} + m_{1, -1} + m_{-1, -1} + m_{-1, 1} - j} C_{m_{1, 1} + m_{1, -1} + m_{-1, -1} + m_{-1, 1} - j}^{m_{1, 1} + m_{1, -1} - i}.
\end{multline*} 

Сперва зафиксируем вектор $\mathbf y \in \mathcal F$ и введем следующие обозначения:  
$$
    l_{\alpha, \beta} = |\{i \in \{1, \ldots, n\} \colon x_i = \alpha \wedge y_i = \beta\}|, \quad \alpha, \beta \in \{-1, 0, 1\}.
$$
Также мы будем использовать обозначения $L_0$ и $L^*_0$, данные выше, и положим $l = |L^*_0|$. Тогда можно сформулировать, что некий вектор $\mathbf u$ принадлежит к совокупности ``плохих'' векторов $\mathcal W_{\mathbf x} \backslash \mathcal S_{\mathbf x}$, если имеет как минимум $t - 2(m_{1,0} + m_{-1, 0})$ ненулевых координат среди координат из множества $L^*_0$. Обозначим количество таких координат $j$, тогда, очевидно, $t - 2(m_{1,0} + m_{-1, 0}) \leqslant j \leqslant l$. Среди $j$ ненулевых координат будем также выбирать $i$ единиц. Тогда общее количество способов выбрать ненулевые координаты вектора $\mathbf u$ в множестве $L^*_0$ равно
$$
    \sum\limits_{j = t - 2(m_{1,0} + m_{-1, 0})}^l \sum\limits_{i=0}^j C_l^j C_j^i.
$$

В множестве $L_0$ рассмотрим сперва ненулевые координаты вектора $\mathbf x$. Расположить единицы и минус единицы вектора $\mathbf u$ при известных $j$ и $i$ среди этих координат можно 
$$
    C_{m_{-1} + m_1 - l}^{m_{1,1} + m_{1, -1} + m_{-1, 1} + m_{-1, -1} - j} C_{m_{1,1} + m_{1, -1} + m_{-1, 1} + m_{-1, -1} - j}^{m_{1,1} + m_{1,-1} - i}
$$
способами. Здесь мы воспользовались определениями величин $m_{\alpha, \beta}$ и тем фактом, что мы уже выбрали $j$ ненулевых координат и, в частности, $i$ единиц среди множества $L_0$. Наконец, значения на нулевых координатах вектора $\mathbf x$ выбираются 
$$
    C_{m_0}^{m_{-1, 0}} C_{m_{0, 0} + m_{1, 0}}^{m_{0, 0}}
$$
способами. Оставшиеся координаты восстанавливаются автоматически. Таким образом, при фиксированном векторе $\mathbf y$ количество ``плохих'' векторов из $\mathcal W_{\mathbf x} \backslash \mathcal S_{\mathbf x}$ равно 
\begin{multline*}
    C_{m_0}^{m_{-1, 0}} C_{m_{0, 0} + m_{1, 0}}^{m_{0, 0}} \sum\limits_{j = t - 2(m_{1,0} + m_{-1, 0})}^l \sum\limits_{i=0}^j C_l^j C_j^i C_{m_{-1} + m_1 - l}^{m_{1,1} + m_{1, -1} + m_{-1, 1} + m_{-1, -1} - j} \times  \\ 
    \times C_{m_{1,1} + m_{1, -1} + m_{-1, 1} + m_{-1, -1} - j}^{m_{1,1} + m_{1,-1} - i}. 
\end{multline*}

Для того, чтобы просуммировать полученное выражение по всем возможным векторам $\mathbf y \in \mathcal F$, установим неравенства на параметр $l$. Для начала заметим, что $l = l_{1,1} + l_{1, -1} + l_{-1, 1} + l_{-1, -1}$. Из построения совокупности $\mathcal F$ и определения величин $l_{\alpha, \beta}$ получаем соотношение $l + l_{1, 0} + l_{-1, 0} = m_{-1} + m_1$. С другой стороны, $l_{1, 0} + l_{-1, 0} \leqslant m_0$. Отсюда 
$$
    l = m_{-1} + m_1 - (l_{1, 0} + l_{-1, 0}) \geqslant m_{-1} + m_1 - m_0. 
$$
Что касается неравенства в противоположную сторону, то, разумеется, 
$$
    l \leqslant m_{-1} + m_1.
$$
Наконец, по построению совокупности $\mathcal F$ имеем, что $l - 2l_{1, -1} - 2l_{-1, 1} < s$. Но величины $l_{1,-1}$ и $l_{-1,1}$ не превосходят $\min(m_{-1}, m_1)$, поэтому
$$
    l < s + 2(l_{1, -1} + l_{-1, 1}) \leqslant s + 4\min(m_{-1}, m_1). 
$$
Окончательно
$$
    m_{-1} + m_1 - m_0 \leqslant l \leqslant \min(s + 4\min(m_{-1}, m_1), m_{-1} + m_1). 
$$

Суммируя теперь по всем возможным векторам $\mathbf y \in \mathcal F$, получаем, что 
\begin{multline*}
    |\mathcal W_{\mathbf x} \backslash \mathcal S_{\mathbf x}| \leqslant h(n, m_{-1}, m_0, m_1, s) C_{m_0}^{m_{-1, 0}} C_{m_{0, 0} + m_{1, 0}}^{m_{0, 0}} \times \\
    \times \max_{m_{-1} + m_1 - m_0 \leqslant l \leqslant \min(s + 4\min(m_{-1}, m_1), m_{-1} + m_1)} \sum\limits_{j = t - 2(m_{1,0} + m_{-1, 0})}^l \sum\limits_{i=0}^j C_l^j C_j^i \times \\
    \times C_{m_{-1} + m_1 - l}^{m_{1,1} + m_{1, -1} + m_{-1, 1} + m_{-1, -1} - j} C_{m_{1,1} + m_{1, -1} + m_{-1, 1} + m_{-1, -1} - j}^{m_{1,1} + m_{1,-1} - i}.
\end{multline*}
Остается отметить, что
\begin{align*}
    |S| &= \sum\limits_{\mathbf x \in \mathcal F} |\mathcal S_{\mathbf x}| = \sum\limits_{\mathbf x \in \mathcal F} (|\mathcal W_{\mathbf x}| - |\mathcal W_{\mathbf x} \backslash \mathcal S_{\mathbf x}|) \geqslant \\
    & \geqslant h(n, m_{-1}, m_0, m_1, s) \Bigl( \prod^1_{\beta = -1} C^{m_{-1, \beta}}_{m_{\beta}} C^{m_{0, \beta}}_{m_{0, \beta} + m_{1, \beta}} - \\ & h(n, m_{-1}, m_0, m_1, s) C_{m_0}^{m_{-1, 0}} C_{m_{0,0} + m_{1,0}}^{m_{0,0}} R \Bigr),
\end{align*}
где величина $R$ определена в условиях теоремы. 

\subsection{Доказательство теоремы \ref{th:Pairs}}\label{sect:Pairs-proof}

Рассмотрим пары $\{1, 2\}, \{3, 4\}, \ldots, \{n-1, n\}$. Выберем из них $(k_1 + k_{-1})/2$ пар и рассмотрим совокупность векторов из $\{-1, 0, 1\}^n$ таких, что все их ненулевые координаты содержатся в этих парах. Легко понять, что скалярное произведение любых двух векторов из этой совокупности четно, поскольку вклад в него каждой пары четен. Поскольку $t$ --- нечетное число, то запрет $t$ в построенной совокупности не реализуется. Размер полученной совокупности равен количеству способов выбрать пары и распределить в них единицы и минус единицы, то есть $C^{(k_1 + k_{-1})/2}_{n/2} C^{k_1}_{k_1 + k_{-1}}$, как и требовалось.

\end{fulltext}

\end{document}